\tikzset{
    -Latex,auto,node distance =1 cm and 1 cm,semithick,
    state/.style ={ellipse, draw, minimum width = 0.7 cm},
    point/.style = {circle, draw, inner sep=0.04cm,fill,node contents={}},
    bidirected/.style={Latex-Latex,dashed},
    el/.style = {inner sep=2pt, align=left, sloped}
}
\newenvironment{pict}[2]
	{\setlength{\unitlength}{1mm}
	\begin{center}
	\begin{picture}(#1,#2)
	\scriptsize
}
	{\end{picture}
	\end{center}
 	\noindent}
\newcommand{\D}{\overline{\Delta}}
\newcommand{\A}{\mathcal{A}^{\pm 1}}
\begin{document}

\newtheorem{theorem}{Theorem}[section]
\newtheorem{lemma}[theorem]{Lemma}
\newtheorem{corollary}[theorem]{Corollary}
\theoremstyle{definition}
\newtheorem{defn}[theorem]{Definition}
\newtheorem{question}[theorem]{Question}
\newtheorem{prop}[theorem]{Proposition}
\newtheorem{remark}[theorem]{Remark}
\newtheorem*{conj}{Conjecture}
\newtheorem*{thm*}{Theorem}

\author[M. Casals-Ruiz]{Montserrat Casals-Ruiz}
\address{Ikerbasque - Basque Foundation for Science and Matematika Saila,  UPV/EHU,  Sarriena s/n, 48940, Leioa - Bizkaia, Spain}
\email{montsecasals@gmail.com}

\author[I. Kazachkov]{Ilya Kazachkov}
\address{Ikerbasque - Basque Foundation for Science and Matematika Saila,  UPV/EHU,  Sarriena s/n, 48940, Leioa - Bizkaia, Spain}
\email{ilya.kazachkov@gmail.com}

\author[A. Zakharov]{Alexander Zakharov}
\address{Chebyshev Laboratory, St Petersburg State University, 14th Line 29B, Vasilyevsky Island, St.Petersburg, 199178, Russia, and The Russian Foreign Trade Academy, 4a Pudovkina street, 119285, Moscow, Russia.}
\email{zakhar.sasha@gmail.com}

\title{Commensurability of Baumslag-Solitar groups}

\maketitle
\begin{abstract}
    In this paper we classify Baumslag-Solitar groups up to commensurability. In order to prove our main result we give a solution to the isomorphism problem for a subclass of Generalised Baumslag-Solitar groups.
\end{abstract}

\section{Introduction}
The central objects of this paper are the \emph{Generalised Baumslag-Solitar groups}, or GBS groups, for short. A GBS group is simply the fundamental group of a finite graph of groups in which all vertex and edge groups are infinite cyclic. As the name suggests, GBS groups are a natural generalisation of \emph{Baumslag-Solitar groups}, which were introduced in \cite{BS} as examples of non-Hopfian groups. By definition, a Baumslag-Solitar group is an HNN-extension whose base group and associated subgroups are infinite cyclic, that is the fundamental group of a graph of groups with only one vertex and one edge and so it has a presentation of the form $BS(m,n)=\langle a,t \mid t^{-1}a^m t =a^n \rangle$, $m,n \in \mathbb Z\setminus \{0\}$. 

GBS groups have appeared in the study of finitely generated groups of cohomological dimension $2$, see \cite{K1}, splittings of groups and JSJ decompositions, see \cite{K2}, the study of one-relator groups, see \cite{C,Mc, Pi} and mapping tori, see \cite{L2}.

Two groups are called {\it (abstractly) commensurable} if they have
isomorphic subgroups of finite index. This is an equivalence relation, and finitely generated commensurable groups are quasi-isometric.
Gromov suggested to study groups from geometric point of view and understand the relation between these two concepts, as well as study quasi-isometry and commensurability classification of groups. The classification of groups up to commensurability has a long history and a number of solutions for very diverse classes of groups, see, for instance, \cite{CKZ1} and references there.

(G)BS groups have been studied both from geometric and algebraic perspectives. Geometrically, there is a number of strong results classifying the class of GBS groups up to quasi-isometry. In \cite{FM}, Farb and Mosher studied the problem for solvable Baumslag-Solitar groups and established that solvable Baumslag-Solitar groups are quasi-isometrically rigid, that is $BS(1,m)$ and $BS(1,n)$ are quasi-isometric if and only if they are commensurable if and only if there exist $r, k$ and $l$ such that $n=r^k$ and $m=r^l$. 

In contrast, Whyte showed that the non-solvable Baumslag-Solitar groups and most GBS groups are not at all quasi-isometrically rigid. In particular, if $n\ne \pm m$, $n,m\ne \pm 1$, then $BS(m,n)$ is quasi-isometric to $BS(2,3)$, see \cite{W}. On the other hand, the unimodular Baumslag-Solitar groups, that is $BS(\pm m,\pm m)$, $m\in \mathbb N$, $m > 1$ are commensurable, and so quasi-isometric, to $F_2 \times \mathbb Z$. At the same time, Whyte showed that $BS(m,n)$ and $BS(p,q)$ are not commensurable, whenever $m$ and $n$ are coprime, and $p$ and $q$ are coprime.

The progress achieved in understanding geometric aspects of GBS groups is in sharp contrast with the algebraic side: the isomorphism problem for GBS groups and their classification up to commensurability are still open. The isomorphism problem has been studied since the very introduction of GBS groups in the 90's, and the problem of classification of Baumslag-Solitar groups up to commensurability since at least the late 90's, alongside their quasi-isometric classification, and was formally formulated by Levitt in \cite{L5}.

There are, however, some partial results on the isomorphism problem and the commensurability classification. In a series of papers \cite{F1,F2,F3,CF1, CF2}, Clay and Forester studied GBS groups via their actions on the Bass-Serre trees and the structure of their splittings. More precisely, the authors showed that two splittings of the groups are in the same deformation space and they are related by a sequence of moves described in \cite{CF2}. In general, the number of reduced graphs in the deformation space may be infinite. In the special cases when there are only finitely many reduced graphs in the deformation space, one can obtain useful information about $Out(G)$, see \cite{GHMR,L3,P} and solve the isomorphism problem for some subclasses: when $Out(G)$ does not contain a non-abelian free group, see \cite{L3}, when the modular groups contain no integers other than $\pm 1$, see \cite{F2}, for GBS groups whose labeled graphs have first Betti number at most one, see \cite{CF1} and for GBS groups where one of the underlying graphs has a sole mobile edge, see \cite{D2}. 

First results on commensurability of (generalised) Baumslag-Solitar groups are closely linked to the study of their quasi-isometric classification, see \cite{FM,W}.  In \cite{W} (see also \cite{D1}), Whyte described finite index subgroups of $BS(m,n)$, where $\gcd(m,n) = 1$ and showed that no two groups in this class  are  commensurable. In  his  work,  see  \cite{L2},  Levitt  studied  the  class  of  GBS groups that generalises the condition $\gcd(m,n) = 1$ for $BS(m,n)$, that is the class of GBS groups without proper plateaus. In his work, Levitt showed that finite index subgroups of such GBS groups correspond to covers, described their rank and classified some groups from the class up to commensurability, see \cite[Lemma 6.4, Proposition 6.5]{L2}.

\bigskip

In this paper we address two central algebraic questions for (generalised) Baumslag-Solitar groups: the isomorphism problem for a class of GBS groups and the classification of Baumslag-Solitar groups up to commensurability.

Our first result is an explicit solution to the isomorphism problem for a certain class of GBS groups whose deformation space contains infinitely many reduced graphs. As we noticed, the previous results on the isomorphism problem were for classes of GBS groups with finitely many reduced graphs in the deformation space. More precisely, we solve the isomorphism problem for finite index subgroups of groups $G_{1,q}^d$ which are fundamental groups of the bouquets of Baumslag-Solitar groups $BS(1,q)$, see Equation \eqref{eq:gdpq} in the case $p=1$. 

\begin{theorem}\label{isothm}
The isomorphism problem is decidable for finite index subgroups of groups $G_{1,q}^d$, $q,d \in \mathbb N$, i.e. there is an algorithm that given two finite index subgroups  $H_1 \leq G_{1,q_1}^{d_1}$, $H_2 \leq G_{1,q_2}^{d_2}$ (given by their finite generating sets) decides whether or not $H_1$ and $H_2$ are isomorphic.
\end{theorem}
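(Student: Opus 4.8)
The plan is to solve the problem in three stages: first pass from an abstract finite index subgroup to an explicit labeled graph, then isolate a list of computable isomorphism invariants for the labeled graphs that actually arise, and finally verify completeness using the deformation-space machinery of Clay and Forester.

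First I would produce, algorithmically, a GBS presentation of the input subgroups. Since $G_{1,q}^d$ acts on its Bass--Serre tree $T$ (the $d(q+1)$-regular tree, with vertex stabilisers the conjugates of $\langle a\rangle$), a finite index subgroup $H\le G_{1,q}^d$ acts on the same tree, and the quotient $H\backslash T$ is a finite labeled graph presenting $H$ as a GBS group; this is the covering-space description of finite index subgroups used by Levitt \cite{L2}. Given a finite generating set of $H$ together with the index $[G_{1,q}^d:H]$, the finite graph of groups $H\backslash T$ and all of its edge labels can be computed by coset enumeration followed by a Stallings-type folding of the core. The output is a reduced labeled graph $\Gamma_H$; because every edge of $G_{1,q}^d$ is ascending with labels $(1,q)$, every edge label of $\Gamma_H$ is a power of $q$, and the restricted modular homomorphism has image $\langle q^{m}\rangle\le \mathbb Q^{>0}$ for a computable $m\ge 1$.

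Next I would reduce the isomorphism question to an equivalence of labeled graphs. For $d\ge 2$ the group $G_{1,q}^d$ contains nonabelian free subgroups (for instance $\langle t_1,t_2\rangle$), so it and its finite index subgroups are non-elementary GBS groups and therefore carry a canonical deformation space; by the results of \cite{F1,CF1,CF2}, two such groups are isomorphic if and only if their reduced labeled graphs lie in a common deformation space, that is, if and only if one is carried to the other by a finite sequence of the elementary moves of \cite{CF2} (slides, inductions, and $\mathcal A^{\pm1}$-moves), possibly composed with a graph isomorphism. The modular homomorphism image $\langle q^{m}\rangle$, read off as a subgroup of $\mathbb Q^{>0}$, is one such invariant and already recovers the relevant power of $q$. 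To this I would adjoin a short list of combinatorial quantities of $\Gamma_H$ that are visibly preserved by every elementary move, the principal ones being the first Betti number of the underlying graph and the isomorphism type of the maximal ascending structure determined by the $(1,q)$-edges.

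The main obstacle is that the deformation space of these groups contains infinitely many reduced graphs, so one cannot decide the equivalence by enumerating and comparing reduced representatives; this is exactly why the earlier methods — restricted to finitely many reduced graphs, or to modular groups containing no integers other than $\pm1$ as in \cite{F2}, or to first Betti number at most one as in \cite{CF1} — do not apply here. The heart of the argument is therefore to produce a genuine canonical form. I would exploit the ascending nature of every petal: because each edge carries a label $1$ on one side, there is a well-defined maximal ascending subforest, and the slide and induction moves can be used to push all of the $q$-labels into a prescribed normal position, collapsing the infinitely many reduced graphs onto a single computable representative in each isomorphism class. Proving that this normalisation terminates and that the resulting normal form, together with the datum $\langle q^{m}\rangle$, is a \emph{complete} invariant is the crux of the proof; granting it, the decision procedure is immediate, since one computes $\Gamma_{H_1}$ and $\Gamma_{H_2}$, normalises both, and compares the finitely many resulting invariants.
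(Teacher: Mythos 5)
Your overall architecture matches the paper's: compute the induced splitting of $H_i$ as a labelled covering graph (the paper cites \cite{KWM} for this step; your folding/coset-enumeration route is fine, and note the index need not be given as input since it is computable from the generators), then normalise using the fact that every edge is ascending with labels $(1,q)$, and finally compare invariants. The paper's normalisation is exactly what you sketch: collapse a spanning tree oriented towards a base vertex (Lemma \ref{collapse}) and then run a Euclid-type sequence of slide moves (Lemma \ref{slide}) to reach a bouquet with one loop labelled $(1,n^m)$ and the rest labelled $(n^{p_i},n^{p_i})$ with $0\le p_i\le m-1$.

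However, there is a genuine gap at precisely the point you flag and then set aside with ``granting it.'' The crux is not that the normalisation terminates (that is routine), but that the resulting data --- the modular image $\langle q^m\rangle$, the Betti number $k$, and the multiset of exponents $p_i$ up to cyclic permutation --- is a \emph{complete} invariant, i.e.\ the necessity direction of the paper's Theorem \ref{iso}. Your proposed list of ``combinatorial quantities visibly preserved by every elementary move'' does not deliver this: the modular image and the Betti number alone are far from separating these groups, and ``the isomorphism type of the maximal ascending structure'' is not a well-defined invariant of the deformation space, since slide, induction and $\mathcal{A}^{\pm 1}$-moves rearrange the ascending loops, change the number of vertices, and pass through intermediate reduced graphs that are not bouquets of circles. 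The paper's solution is a nontrivial construction: from each intermediate graph $\Delta$ it builds a dual edge-coloured, edge-labelled graph $\D$ (after discarding one strictly ascending loop per vertex, whose existence is Lemma \ref{loops}), labels its edges by the exponents mod $m_1$ of ratios of $A$-labels at each vertex (well-defined by Lemma \ref{proportions}), and proves in Lemma \ref{Delta} that the path-label function $L(P_{ij})=p_j-p_i$ is preserved under all three move types. This is the paper's main technical contribution and is what actually certifies that two distinct normal forms whose vectors are not cyclic permutations of each other give non-isomorphic groups; without an argument of this kind your procedure decides only one direction.
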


In order to do so, we describe a normal form for finite index subgroups of groups $G_{1,q}^d$. It is our belief that the ideas to approach the isomorphism problem for GBS groups with mobile edges can be extended to more general classes and may be an important ingredient together with the other established techniques for a solution of the isomorphism problem for GBS groups in general.

We then use the structural results on finite index subgroups of GBS groups $G_{1,q}^d$ to give a complete classification of Baumslag-Solitar groups up to commensurability. In contrast with the complete lack of geometric rigidity, we show that non-solvable Baumslag-Solitar groups are algebraically rigid: apart from some exceptions listed below, they are non-commensurable. More precisely, we prove the following:

\begin{theorem}\label{thm1}
    Let $G_1=BS(m_1,n_1)$ and $G_2=BS(m_2,n_2)$ be two Baumslag-Solitar groups, where $1 \le |m_i| \le n_i$, $i=1,2$. 
    Then the groups $G_1$ and $G_2$ are commensurable if and only if one of the following holds:
    \begin{enumerate}
        \item $|m_1|=|m_2|=1$ and $n_1, n_2$ are powers of the same integer, i.e. 
        $$
        BS(1, n^{k_1}) \sim BS(1, n^{k_2}), n, k_i \in \mathbb N;
        $$
        \item $n_1=n_2$ and $m_1= \pm m_2$, i.e.
        $$
        BS(m_1,n_1) \sim BS(\pm m_1,n_1);
        $$
        \item $|m_1|>1$, $|m_2| >1$, $m_1  \mid  n_1$, $m_2 \mid n_2$ and $\frac{n_1}{|m_1|}=\frac{n_2}{|m_2|}$, i.e.
        $$
        BS(\pm k,kn) \sim BS(\pm l,ln), k,l,n \in \mathbb N, k,l >1.
        $$
    \end{enumerate}
    Here $\sim$ denotes commensurability relation.
\end{theorem}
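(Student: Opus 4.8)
The plan is to prove both implications by reducing commensurability to the comparison of a short list of invariants of the finite‑index subgroups involved, treating the three regimes---solvable, rigid (generic non‑solvable), and ascending ($m \mid n$)---with different tools, the last and hardest relying on the structural results behind Theorem~\ref{isothm}. Throughout I use the normalisation $2 \le |m_i| \le n_i$ for the non‑solvable groups and record the following commensurability invariants of a GBS group $G$: (i) whether $G$ contains a non‑abelian free subgroup, equivalently whether $G$ is non‑solvable; (ii) whether some (equivalently every) finite‑index subgroup has infinite centre, which detects unimodularity $|m|=n$; and (iii) the image of the modular homomorphism $\Delta\colon G \to \mathbb{Q}^{*}$, taken up to commensurability inside $\mathbb{Q}^{*}$. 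Since $\Delta$ is intrinsic for non‑unimodular GBS groups and restricts on any finite‑index subgroup to the latter's own modular homomorphism \cite{L2}, the multiplicative subgroup $\langle \Delta(G)\rangle \le \mathbb{Q}^{*}$ is a commensurability invariant; for $BS(m,n)$ it equals $\langle n/m\rangle$.

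For sufficiency I would argue case by case. In case (1) the groups are metabelian, $BS(1,n)\cong \mathbb{Z}[1/n]\rtimes_n\mathbb{Z}$, and passing to $\mathbb{Z}[1/n]\rtimes_{n^{k}} k\mathbb{Z}$ exhibits $BS(1,n^{k})$ as a finite‑index subgroup of $BS(1,n)$; hence $BS(1,n^{k_1})$ and $BS(1,n^{k_2})$ share the finite‑index subgroup $BS(1,n^{\operatorname{lcm}(k_1,k_2)})$, recovering Farb--Mosher \cite{FM}. In case (2) the abelianisations differ, so the groups need not be isomorphic, but the index‑two subgroup in which $t$ occurs an even number of times is a GBS group that one computes to coincide from $BS(m,n)$ and from $BS(-m,n)$, giving commensurability. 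Case (3) is the crux of this direction: I would show that each $BS(k,kn)$ is commensurable to one of the bouquet groups $G_{1,n}^{d}$ by unwrapping the index‑$k$ edge inclusion in a finite cover of the defining graph of groups, and that all $G_{1,n}^{d}$ (for varying $d\ge 1$) are mutually commensurable because they are fundamental groups of graphs of groups sharing common finite covers. Consequently $BS(k,kn)$ and $BS(l,ln)$ are each commensurable to the family $\{G_{1,n}^{d}\}$, hence to one another; the specialisation $n=1$ recovers Whyte's commensurability of all unimodular groups with $F_2\times\mathbb{Z}$ \cite{W}.

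For necessity I would run these invariants as a filter. Invariant (i) separates case (1) from the rest. Among non‑solvable groups, if $BS(m_1,n_1)\sim BS(m_2,n_2)$ then $\langle n_1/m_1\rangle$ and $\langle n_2/m_2\rangle$ are commensurable in $\mathbb{Q}^{*}$, i.e.\ $n_1/m_1$ and $n_2/m_2$ are multiplicatively dependent; in particular $|n_1/m_1|$ is an integer if and only if $|n_2/m_2|$ is, which already forbids commensurability between the ascending family (integer ratio $n/|m|$, including the unimodular ratio $1$) and the rigid family (ratio $n_0/m_0$ with $\gcd(m_0,n_0)=1$, $|m_0|\ge 2$, hence non‑integral). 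For the rigid coprime groups I would invoke Whyte's theorem \cite{W} that the $BS(m,n)$ with $\gcd(m,n)=1$ are mutually non‑commensurable, pinning down the full pair $(|m|,n)$ and leaving only the sign change of case (2), the remaining generic groups reducing to the coprime case after extracting the $\gcd$. Finally, within the ascending family I would use the normal form for finite‑index subgroups of $G_{1,q}^{d}$ established in the proof of Theorem~\ref{isothm} to show that the modular number $q=n/|m|$ is a \emph{complete} commensurability invariant there, upgrading mere multiplicative dependence to the exact equality $n_1/|m_1|=n_2/|m_2|$; note the contrast with the solvable case, where multiplicative dependence alone suffices.

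The main obstacle is case (3), the ascending family, whose deformation space contains infinitely many reduced labelled graphs, so that one cannot simply enumerate reduced representatives and compare. Both halves of the argument for this family---realising a common finite‑index subgroup of $BS(k,kn)$ and $BS(l,ln)$ despite their differing values $\gcd=k,l$, and proving that $q$ is a complete invariant rather than merely the coarser multiplicative class---depend essentially on the canonical normal form behind Theorem~\ref{isothm}. I expect the delicate point to be showing that no hidden invariant beyond $q$ survives in this infinite deformation space, equivalently that the differing first Betti numbers $d$ of the bouquets $G_{1,q}^{d}$ genuinely do not obstruct commensurability.
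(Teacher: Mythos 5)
Your overall architecture matches the paper's: sufficiency via explicit finite-index subgroups (the index-two kernel for the sign change, the subgroups $G^d_{1,n}$ for the ascending family), and necessity via the filter solvability $\to$ modular image $\to$ finer invariants, with the ascending case resting on the normal form behind Theorem~\ref{isothm} exactly as in Lemma~\ref{ascending}. However, there is a genuine gap in your necessity argument for the non-ascending non-solvable groups. You propose to invoke Whyte's non-commensurability theorem for the coprime groups and to handle the rest by ``reducing to the coprime case after extracting the $\gcd$.'' That reduction does not exist: if $\gcd(m,n)=d>1$ and $m\nmid n$, the finite-index subgroup obtained by extracting the $\gcd$ is $G^d_{p,q}$ (a bouquet of $d$ loops), which is \emph{not} the Baumslag--Solitar group $BS(p,q)$, and indeed $BS(dp,dq)$ is generally not commensurable to $BS(p,q)$ --- e.g.\ $BS(4,6)$ and $BS(2,3)$ are non-commensurable by the very theorem you are proving, even though both have modular image $\langle 3/2\rangle$. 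So Whyte's theorem cannot be made to cover this case, and the modular invariant alone does not separate, say, $BS(4,6)$ from $BS(4,9)$ or from $BS(2,3)$. The paper closes this with Lemma~\ref{non-ascending}: since $m\nmid n$ forces no non-trivial integral moduli, the deformation space of any finite-index subgroup is non-ascending, so any two reduced graphs are related by slide moves only; slide moves preserve both the number of vertices and the multiset of edge labels $\{p,q\}$, which pins down $(p_1,q_1)=(p_2,q_2)$ and then $d_1=d_2$ from the rank count $V(d-1)$. Your proposal contains no substitute for this argument.

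A smaller point: in case (3) you assert that ``all $G_{1,n}^{d}$ (for varying $d\ge 1$) are mutually commensurable.'' This fails for $d=1$, since $G^1_{1,n}=BS(1,n)$ is solvable while $G^2_{1,n}$ contains a non-abelian free group; the correct statement (and the one the paper proves in Lemma~\ref{l5}, by pulling back finite-index subgroups of $F_2$) is that $G^k_{1,n}$ embeds with finite index in $G^2_{1,n}$ for all $k\ge 2$. Since case (3) only involves $k,l>1$ this does not derail the proof, but the claim as written is false.
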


\paragraph{\emph{Acknowledgement.}} This work was supported by ERC grant PCG-336983, Basque Government Grant IT974-16 and Spanish Government grant MTM2017-86802-P. The third author was partially supported by the grant 346300 for IMPAN from the Simons Foundation and the matching 2015-2019 Polish MNiSW fund. The authors would like to thank G. Levitt for discussions and encouragement.

\section{Preliminaries}

We assume that the reader is familiar with Bass-Serre theory, see \cite{Serre}.

A \emph{Generalised Baumslag-Solitar group}, or GBS group, for short, is simply the fundamental group of a finite graph of groups in which all vertex and edge groups are infinite cyclic. A GBS group is completely defined by a finite directed graph $\Gamma$ and a labelling: for each oriented edge $e$ there is a label $A (e) \in \mathbb{Z} \setminus \{0 \}$ defining the embedding at the origin of $e$ and $\Omega(e) \in \mathbb{Z} \setminus \{0 \}$ defining the embedding at the end of $e$. 

A graph of groups is called \emph{reduced} if for every edge which is not a loop none of the two embeddings of the edge group into the corresponding vertex groups is an isomorphism. For GBS groups, this just means that if $e$ is not a loop then $A(e) \neq \pm{1}$ and $\Omega(e) \neq \pm{1}$. A \emph{simple cycle} (in a graph) is a closed path  without repetitions of vertices (except for the first and last vertices in the path) or edges. In a directed graph, a \emph{directed simple cycle} is a simple cycle with consistent edge orientation.

By convention, we will always include the inverses of oriented edges in our graph. The original directed edges of the graph will be called \emph{positive} edges, and their inverses, \emph{negative}. 

For an oriented edge $e$ of a directed graph we denote by $\alpha(e)$ its initial vertex, by $\omega(e)$ its terminal vertex and by $e^{-1}$ its inverse. For a GBS graph we have $A(e)=\Omega(e^{-1})$.

By definition, every GBS group can be studied via its actions on Bass-Serre trees with infinite cyclic vertex and edge stabilizers. Note that for GBS groups the graph of groups splitting is determined uniquely by the action on a Bass-Serre tree (and vice versa, which is always true). The indexed graph is then almost uniquely determined, up to two allowed change of sign moves: changing the signs of both labels of a given edge, and changing the signs of all labels of edges at some given vertex, see \cite{F2}.

By convention, we establish that $a^g=gag^{-1}$. By $gcd(p,q)$ we denote the greatest common divisor of $p$ and $q$.

The theory of {\it deformation spaces} is crucial for the study of GBS groups. We now recall some of the facts which will be used and we refer the reader to  \cite{F1}, \cite{F2} and \cite{GL} for definitions and basic properties. For any GBS group, which is not isomorphic to $\mathbb{Z} \times \mathbb{Z}$ or the Klein bottle group (i.e., to $BS(1,1)$ or $BS(1,-1)$), the set of elliptic subgroups is the same for all its actions on Bass-Serre trees with infinite cyclic vertex and edge stabilizers, see \cite{F1}. Therefore, any two splittings of such a GBS group with infinite cyclic vertex and edge groups are in the same deformation space. This means that these graphs of groups are related via a sequence of collapse and expansion moves (also called elementary deformations), which are defined below.

Any two reduced graphs of groups in the same deformation space are also related by a sequence of 3 types of moves: slide moves, induction moves and $\mathcal{A}^{\pm 1}$-moves. We now illustrate these moves in the GBS groups case; for more details, for the case of arbitrary deformation spaces and proofs see \cite{CF1}, \cite{CF2}.

Collapse and expansion moves are as follows:
\begin{pict}{90}{10}
\thicklines
\put(10,5){\circle*{1}}
\put(25,5){\circle*{1}}
\put(10,5){\line(1,0){15}}

\thinlines
\put(45,6.5){\vector(1,0){15}}
\put(60,4.2){\vector(-1,0){15}}
\put(47,8.2){\footnotesize collapse}
\put(46,1){\footnotesize expansion}
\put(25,5){\line(3,5){3}}
\put(25,5){\line(3,-5){3}}

\put(10,5){\line(-5,3){5}}
\put(10,5){\line(-5,-3){5}}

\put(80,5){\circle*{1}}
\put(80,5){\line(-5,3){5}}
\put(80,5){\line(-5,-3){5}}

\put(80,5){\line(3,5){3}}
\put(80,5){\line(3,-5){3}}

\put(8.5,8){$a$}
\put(8.5,2){$b$}
\put(12,6.5){$n$}
\put(23,6.5){$1$}
\put(28.5,7.5){$c$}
\put(28.5,2.5){$d$}

\put(78.5,8){$a$}
\put(78.5,2){$b$}
\put(82.5,7.5){$nc$}
\put(82.5,2.5){$nd$}
\end{pict}

There are two slide moves:
\smallskip
\begin{pict}{100}{11}
\thicklines
\put(75,3){\circle*{1}}
\put(90,3){\circle*{1}}
\put(75,3){\line(1,0){15}}
\put(90,3){\line(-1,2){4}}

\thinlines
\put(47.5,3){\vector(1,0){10}}
\put(49,5){\footnotesize slide}
\put(90,3){\line(5,3){5}}
\put(90,3){\line(5,-3){5}}
\put(75,3){\line(-5,3){5}}
\put(69,3){\line(1,0){6}}
\put(75,3){\line(-5,-3){5}}

\put(76,1.5){$m$}
\put(88,1.5){$n$}
\put(84.5,6){$\ell n$}

\thicklines
\put(10,3){\circle*{1}}
\put(25,3){\circle*{1}}
\put(10,3){\line(1,0){15}}
\put(10,3){\line(1,2){4}}

\thinlines
\put(25,3){\line(5,3){5}}
\put(25,3){\line(5,-3){5}}
\put(10,3){\line(-5,3){5}}
\put(4,3){\line(1,0){6}}
\put(10,3){\line(-5,-3){5}}

\put(11,1.5){$m$}
\put(23,1.5){$n$}
\put(7.5,7){$\ell m$}
\end{pict} 
and

\begin{pict}{100}{10}
\thicklines
\put(82.5,5){\circle*{1}}
\put(87.5,5){\circle{10}}
\put(72.5,5){\line(1,0){10}}

\thinlines
\put(47.5,5){\vector(1,0){10}}
\put(49,7){{\footnotesize slide}}
\put(82.5,5){\line(-5,-3){4.5}}
\put(82.5,5){\line(-1,-4){1.2}}
\put(82.5,5){\line(-1,4){1.2}}

\put(84,3.5){$m$}
\put(84,6.5){$n$}
\put(78,6.8){$\ell n$}

\thicklines
\put(17.5,5){\circle*{1}}
\put(22.5,5){\circle{10}}
\put(7.5,5){\line(1,0){10}}

\thinlines
\put(17.5,5){\line(-5,-3){4.5}}
\put(17.5,5){\line(-1,-4){1.2}}
\put(17.5,5){\line(-1,4){1.2}}

\put(19,3.5){$m$}
\put(19,6.5){$n$}
\put(12.5,6.8){$\ell m$}
\end{pict} 
An induction move is as follows:

\smallskip
\begin{pict}{80}{10}
\thicklines
\put(6,5){\circle{10}}
\put(11,5){\circle*{1}}
\put(68,5){\circle{10}}
\put(73,5){\circle*{1}}

\thinlines
\put(11,5){\line(1,1){4}}
\put(11,5){\line(1,-1){4}}
\put(73,5){\line(1,1){4}}
\put(73,5){\line(1,-1){4}}

\put(14,7){$a$}
\put(14,3.5){$b$}
\put(7,6.7){$1$}
\put(6,3.3){$\ell m$}
\put(31.5,5){\vector(1,0){17}}
\put(48.5,5){\vector(-1,0){17}}
\put(34,7){\footnotesize induction}
\put(76.5,7){$\ell a$}
\put(76.5,3.3){$\ell b$}
\put(69.5,6.7){$1$}
\put(68.5,3.3){$\ell m$}
\end{pict}
Both directions of the move are considered induction moves. This move decomposes into a sequence of  elementary deformations as follows: 
\smallskip
\begin{pict}{100}{10}
\thicklines
\put(48,5){\oval(10,10)[b]}

\put(88,5){\circle{10}}
\put(93,5){\circle*{1}}

\thinlines
\put(6,5){\circle{10}}
\put(11,5){\circle*{1}}

\put(48,5){\circle{10}}
\put(43,5){\circle*{1}}
\put(53,5){\circle*{1}}

\put(11,5){\line(1,1){4}}
\put(11,5){\line(1,-1){4}}

\put(53,5){\line(1,1){4}}
\put(53,5){\line(1,-1){4}}

\put(93,5){\line(1,1){4}}
\put(93,5){\line(1,-1){4}}

\put(15,7){$a$}
\put(15,3.2){$b$}
\put(8,6.7){$1$}
\put(6,3.3){$\ell m$}

\put(22,5){\vector(1,0){12}}
\put(21.5,6.5){\footnotesize {expansion }}

\put(57,7){$a$}
\put(57,3.2){$b$}
\put(50,6.7){$1$}
\put(50,3){$m$}
\put(41.5,6.7){$\ell$}
\put(41,3){$1$}

\put(64,5){\vector(1,0){12}}
\put(65,6.5){\footnotesize collapse}

\put(97,7){$\ell a$}
\put(97,3){$\ell b$}
\put(90,6.7){$1$}
\put(88.7,3.3){$\ell m$}
\end{pict}

Finally, the $\mathcal{A}^{\pm 1}$-moves are below:
\begin{pict}{90}{10}
\thicklines
\put(6,5){\circle{10}}
\put(11,5){\circle*{1}}
\put(73,5){\circle{10}}
\put(78,5){\circle*{1}}
\put(24,5){\circle*{1}}
\put(11,5){\line(1,0){13}}

\thinlines
\put(24,5){\line(1,1){4}}
\put(24,5){\line(1,-1){4}}
\put(78,5){\line(1,1){4}}
\put(78,5){\line(1,-1){4}}

\put(27.5,7){$a$}
\put(27.5,3){$b$}
\put(8,6.6){$1$}
\put(6.7,3){$\ell m$}
\put(12,6){$\ell$}
\put(22,6){$k$}
\put(40,6.5){\vector(1,0){15}}
\put(55,4.5){\vector(-1,0){15}}
\put(45,8.5){$\mathcal{A}^{-1}$}
\put(45,2){$\mathcal{A}$}

\put(82,7){$a$}
\put(81,3){$b$}
\put(75,6.7){$k$}
\put(72,3){$k\ell m$}
\end{pict}

Note that, since edge groups are cyclic, for any two non-trivial elliptic elements $a,b$ in a GBS group $\langle a \rangle \cap \langle b \rangle$ has finite index in both $\langle a \rangle$ and $\langle b \rangle$.

Another important tool in studying (G)BS groups is the so-called \emph{modular homomorphism}, see \cite{L3} and \cite{F2} for more details. The modular homomorphism $\Delta_G$ from a GBS group $G$ to the multiplicative group of rational numbers $\mathbb{Q}^*$ is defined as follows: given $g \in G$, take any elliptic element $a$ and find non-zero $p,q$ such that $ga^qg^{-1}=a^p$, and define $\Delta_G(g)=p/q$. In this notation, $\Delta_G(g)$ is called the modulus of $g$. 
One can show that the modular homomorphism is well-defined, i.e. does not depend on the particular choice of $a$ and $p,q$. Moreover, it is trivial on the elliptic elements by definition, so it factors through the quotient of $G$ by the normal subgroup generated by all the elliptic elements, which can be thought of as the (topological) fundamental group of the underlying graph $\Gamma$.

Denote by $M(G)$ the image of the modular homomorphism $\Delta_G$.

\begin{remark}\label{rem}
It is not difficult to see that if $g \in G$ corresponds to a closed path $e_1e_2 \ldots e_l$ in $\Gamma$, then $\Delta_G(g)=\frac{A(e_1)}{\Omega(e_1)}\frac{A(e_2)}{\Omega(e_2)} \dots \frac{A(e_l)}{\Omega(e_l)}$. It follows that $M(BS(m,n))= \langle \frac{n}{m} \rangle_{\mathbb{Q}^*} = \{ (\frac{n}{m})^k, \: k \in \mathbb{Z} \}$.
\end{remark}

\begin{remark} \label{r4}
If $H$ is a finite index subgroup of a GBS group $G$, then the modular homomorphism for $H$ is just the restriction of that for $G$. It follows that  $M(H)$ is a finite index subgroup in $M(G)$. Thus, for any finite index subgroup $H$ of $BS(m,n)$ we have $M(H)= \langle (\frac{n}{m})^k \rangle_{\mathbb{Q}^*}$ for some $k \in \mathbb{Z}_+$. Therefore, if $BS(m,n)$ and $BS(p,q)$ are commensurable, then there exist $k, l \in \mathbb{Z} \backslash \{ 0 \}$ such that $(\frac{n}{m})^k=(\frac{q}{p})^l$. In particular, if $m_1 \mid n_1$, but $m_2 \nmid n_2$, then $BS(m_1,n_1)$ and $BS(m_2,n_2)$ are not commensurable.
\end{remark}

A GBS group $G$ is said to have \emph{no non-trivial integral moduli} (as defined in \cite{F2}) if the image of the modular homomorphism $M(G)$ contains no integers of absolute value bigger than 1. The class of GBS groups with no non-trivial integral moduli is better understood than GBS groups in general. In particular, the isomorphism problem for such GBS groups was solved by Forester in \cite{F2}. One of the reasons why such GBS groups are better behaved is that the deformation space for such groups is non-ascending, i.e. any graph in such space has no strictly ascending loops (a loop is called \emph{strictly ascending} if one of its labels is $\pm 1$ and the other is not), see \cite{CF1} and \cite{GL}. It is proved in \cite{CF1} that in a non-ascending deformation space any two reduced graphs are related by slide moves, while in general deformation spaces (even for GBS groups) induction moves and $\mathcal{A}^{\pm 1}$-moves are necessary, see \cite{CF1}, \cite{F2}, \cite{F3}. In particular, in a non-ascending  deformation space any two reduced graphs have the same number of vertices and edges, which might be no longer true in the ascending case. We refer the reader to  \cite{L4} for a simple example (in the case of $BS(2,6)$). Note that if $G$ is a GBS group with no non-trivial integral moduli, and $H$ is a finite index subgroup of $G$, then $H$ also has no non-trivial integral moduli, by Remark \ref{r4}.

It is well-known that Baumslag-Solitar group $BS(m,n)$ is solvable if and only if $m$ or $n$ is equal to $\pm 1$, and otherwise it contains a non-abelian free subgroup, so is not quasi-isometric to a solvable one. 
Since $BS(-m,-n) \cong BS(m,n) \cong BS(n,m)$, we can always assume that $n \geq |m| \geq 1$.

\section{Finite index subgroups of (G)BS groups}

Finite index subgroups of Baumslag-Solitar groups $BS(p,q)$ with $\gcd(p,q)=1$, and, more generally, subgroups of GBS groups without proper plateau, have a very nice description: they correspond to covers of the graph $\Gamma$ defining the group (with lifted labels), see \cite[Corollary 6.6]{L2}. 

The notion of {\it plateau} was introduced in \cite[Definition 3.1]{L2} and we recall it here for completeness. For a prime number $p'$ a non-empty connected subgraph $P$ of a GBS graph $\Gamma$ is a $p'$-plateau if the following condition holds: for every edge $e$ which starts in a vertex of $P$ the label $A(e)$ is divisible by $p'$ if and only if $e$ is not contained in $P$. The graph $P$ is called a plateau if it is a $p'$-plateau for some prime $p'$. It is proper if it doesn't coincide with the whole $\Gamma$.

In particular, one can completely describe finite index subgroups of $BS(p,q)$ when $\gcd(p,q)=1$, as follows. 
  
\begin{prop}[see \cite{D1, L2,W}] Let $H$ be a finite index subgroup of the Baumslag-Solitar group $BS(p,q)$, where $\gcd(p,q)=1$. Then $H$ is a GBS group given by a cycle $e_1e_2 \ldots e_l$, $l \geq 1$, and such that $A(e_i)=p$, $\Omega(e_i)=q$, for each $i=1, \dots, l$. Moreover, this cycle is the induced (from the action of $H$ on the Bass-Serre tree of $BS(p,q)$) graph of groups decomposition of $H$.
\end{prop}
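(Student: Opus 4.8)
The plan is to reduce the statement to the already-cited description of finite-index subgroups of GBS groups without proper plateau, so the first task is to verify that $BS(p,q)$ with $\gcd(p,q)=1$ indeed has no proper plateau. The defining graph $\Gamma$ of $BS(p,q)=\langle a,t \mid t^{-1}a^p t = a^q\rangle$ is a single vertex $v$ (with group $\langle a \rangle$) carrying a single loop $e$ with $A(e)=p$ and $\Omega(e)=q$, so that $A(e^{-1})=\Omega(e)=q$. The only non-empty connected proper subgraph is $P=\{v\}$, the vertex with no edges. For $P$ to be a $p'$-plateau one would need, for the two edges $e,e^{-1}$ emanating from $v$ and not lying in $P$, that $p'\mid A(e)=p$ and $p'\mid A(e^{-1})=q$; since $\gcd(p,q)=1$ no prime $p'$ can divide both, so $P$ is not a plateau. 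Hence $BS(p,q)$ has no proper plateau and \cite[Corollary 6.6]{L2} applies: $H$ is a GBS group whose graph of groups is a finite cover $\pi\colon \bar\Gamma \to \Gamma$ carrying the lifted labels. That $H$ is a GBS group at all is in any case clear, since $H$ acts on the Bass-Serre tree $T$ of $G$ with infinite cyclic vertex and edge stabilizers.

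Next I would pin down the shape of the cover. Topologically $\Gamma$ is a circle, and $\bar\Gamma = T/H$ is connected because it is the quotient of the connected tree $T$; being a connected finite-sheeted cover of a circle, it must be a single cycle $e_1 e_2 \ldots e_l$ with $l\ge 1$ equal to the degree of $\pi$ (the number of preimages of $v$). Traversing this cycle once corresponds to winding $l$ times around the base loop $e$ in a single consistent direction, so after orienting the cycle coherently every $e_i$ projects onto $e$ with the same orientation. Because the labels are lifted, $A(e_i)=A(e)=p$ and $\Omega(e_i)=\Omega(e)=q$ for each $i$, which is exactly the asserted labelling.

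Finally, the ``moreover'' clause is immediate from the construction: the cover $\bar\Gamma$ furnished by \cite[Corollary 6.6]{L2} is by definition the quotient $T/H$, i.e. the graph-of-groups decomposition induced on $H$ by its action on the Bass-Serre tree of $BS(p,q)$. I expect the only genuinely delicate points to be (i) the plateau verification, which is precisely where the hypothesis $\gcd(p,q)=1$ enters, and (ii) the orientation bookkeeping needed to conclude that all edge labels are exactly $p$ and $q$ rather than divisors or multiples of them. The latter is really the statement that the vertex-group indices of $H$ are coprime to $pq$, which is the substance of the ``lifted labels'' conclusion. If one preferred not to invoke Corollary 6.6 as a black box, this coprimality could be established directly: for a generator $b$ of the ambient cyclic vertex group at a vertex, writing $\langle b^{d}\rangle = H\cap \langle b\rangle$, the label on the $p$-side edge equals $p/\gcd(d,p)$ and on the $q$-side equals $q/\gcd(d,q)$, and one argues via the no-plateau condition that $\gcd(d,pq)=1$, forcing both labels to be $p$ and $q$.
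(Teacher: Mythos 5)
Your argument is correct and follows essentially the same route the paper takes: it verifies that the one-loop graph of $BS(p,q)$ with $\gcd(p,q)=1$ has no proper plateau and then invokes \cite[Corollary 6.6]{L2} to get a label-preserving connected cover, which for a circle must be a single directed cycle. This is exactly how the paper handles the more general Lemma~\ref{l3} (the proposition itself is only cited to \cite{D1,L2,W}), so there is nothing to add beyond noting that your optional direct coprimality argument at the end is only a sketch of what \cite[Lemma 6.4]{L2} supplies.
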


For general (G)BS groups this description of finite index subgroups does not hold. However, in \cite[Proposition 6.8]{L2}, Levitt shows that every GBS group contains a finite index subgroup without proper plateau. In order to study finite index subgroups of Baumslag-Solitar groups $BS(m,n)$ when $n>m>1$ and $\gcd(m,n)=d>1$, we first describe a special finite index subgroup of this group without proper plateau.

Let $n>m>1$ and $\gcd(m,n)=d$, $m=dp$, $n=dq$, so $\gcd(p,q)=1$. Define a homomorphism 
$$
\varphi=\varphi_{m,n}: BS(m,n) \rightarrow \langle z \rangle _d, \: \varphi (t) = 1, \: \varphi (a) = z. 
$$
This is well-defined since $n-m$ is a multiple of $d$. Let $H_{m,n}=Ker (\varphi_{m,n})$, which has index $d$ in $BS(m,n)$.

Define the following group:
\begin{equation} \label{eq:gdpq}
G^d_{p,q}=\langle a, t_1, \ldots, t_k \mid t_i^{-1}a^pt_i=a^q, \: i=1, \ldots, k \rangle,
\end{equation}
i.e., $G^d_{p,q}$ is the GBS group with underlying graph having one vertex and $d$ directed loops $e_1, \ldots, e_d$ such that $A(e_i)=p$, $\Omega(e_i)=q$.

The following lemma describes the structure of the subgroup $H_{m,n}$: it is isomorphic to the group $G_{p,q}^d$.

\begin{lemma}\label{l1}
    Let $H_{m,n}$ be a finite index subgroup of $BS(m,n)$ defined as above.
    In the above notation, the following holds.
    \begin{enumerate}
        \item $H_{m,n} = \langle \langle t, a^d \rangle \rangle$.
        \item $H_{m,n} = \langle a^d, t, t^a, \ldots, t^{a^{d-1}} \rangle $.
        \item $H_{m,n}$ is isomorphic to the GBS group $G^d_{p,q}$.  
        \item The subgroups $B_i= \langle a^d, t^{a^i} \rangle$ are all isomorphic to $BS(p,q)$, for $i=0, \ldots, d-1$.
    \end{enumerate}
\end{lemma}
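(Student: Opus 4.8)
To prove (1), I would first note that $\varphi(t)=1$ and $\varphi(a^d)=z^d=1$, so both $t$ and $a^d$ lie in $H_{m,n}=\ker\varphi$, and since the kernel is normal we get $\langle\langle t,a^d\rangle\rangle\leq H_{m,n}$. For the reverse inclusion I would pass to the quotient $Q=BS(m,n)/\langle\langle t,a^d\rangle\rangle$. Imposing $t=1$ turns the defining relation $t^{-1}a^mt=a^n$ into $a^{n-m}=1$, and since $d\mid n-m$ this is implied by the relation $a^d=1$; hence $Q=\langle a\mid a^d=1\rangle\cong\mathbb{Z}/d\mathbb{Z}$, and the quotient map is exactly $\varphi$. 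Therefore $\langle\langle t,a^d\rangle\rangle=\ker\varphi=H_{m,n}$.

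For (2), the plan is to apply Reidemeister--Schreier with the transversal $T=\{1,a,\dots,a^{d-1}\}$, which is prefix-closed, and which is a full set of coset representatives because $\varphi(a^j)=z^j$ runs over all of $\mathbb{Z}/d\mathbb{Z}$. For the generator $a$ every Schreier generator is trivial except at the wrap-around $a^{d-1}\cdot a=a^d$, whose coset representative is $1$, yielding the generator $a^d$. For the generator $t$, since $\varphi(a^jt)=z^j$, the representative of $a^jt$ is $a^j$, so the Schreier generator is $a^jt a^{-j}=t^{a^j}$. Thus $H_{m,n}=\langle a^d,t,t^a,\dots,t^{a^{d-1}}\rangle$, which is (2).

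For (3), writing $x=a^d$ and $u_i=t^{a^i}=a^ita^{-i}$, a direct computation using $t^{-1}a^mt=a^n$ gives $u_i^{-1}x^pu_i=a^it^{-1}a^{dp}ta^{-i}=a^ia^{dq}a^{-i}=x^q$ for each $i$, so by (2) there is a surjection $G^d_{p,q}\to H_{m,n}$ sending $a\mapsto x$ and $t_{i+1}\mapsto u_i$. The real content is injectivity, i.e.\ the absence of further relations, and the cleanest route I would take is to identify the induced graph-of-groups decomposition of $H_{m,n}$ coming from its action on the Bass--Serre tree of $BS(m,n)$. Since $\langle a\rangle$ surjects onto $BS(m,n)/H_{m,n}\cong\mathbb{Z}/d\mathbb{Z}$, there is a single vertex orbit, with stabiliser $H_{m,n}\cap\langle a\rangle=\langle a^d\rangle=\langle x\rangle$; and since $\langle a^m\rangle=\langle x^p\rangle\leq H_{m,n}$, the edge double cosets $H_{m,n}\backslash BS(m,n)/\langle a^m\rangle$ collapse to $H_{m,n}\backslash BS(m,n)$, producing exactly $d$ loop-edges, each with origin label $[\langle x\rangle:\langle x^p\rangle]=p$ and, via $t^{-1}x^pt=x^q$, terminal label $q$. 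This is precisely the defining graph of $G^d_{p,q}$, so $H_{m,n}\cong G^d_{p,q}$.

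Finally, for (4) I would first observe that $B_i=a^i\langle a^d,t\rangle a^{-i}=a^iB_0a^{-i}$, so all the $B_i$ are conjugate in $BS(m,n)$ and it suffices to analyse $B_0=\langle x,t\rangle$. Inside the decomposition of $H_{m,n}\cong G^d_{p,q}$ found in (3), the subgroup $B_0$ is generated by the vertex group $\langle x\rangle$ together with the stable letter $t=u_0$ of the single loop $e_0$; it is therefore the fundamental group of the connected sub-graph-of-groups consisting of that vertex and the loop $e_0$ with labels $(p,q)$, namely $BS(p,q)$. Since the fundamental group of a connected sub-graph-of-groups embeds into that of the ambient graph of groups, $B_0\cong BS(p,q)$, and hence every $B_i\cong BS(p,q)$. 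The main obstacle throughout is step (3)---verifying that the surjection $G^d_{p,q}\to H_{m,n}$ has trivial kernel---and I expect the Bass--Serre computation of the induced splitting to be the decisive ingredient, both there and for the embedding needed in (4).
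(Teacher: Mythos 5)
Your proof is correct, and its decisive steps coincide with the paper's: for (3) you identify the induced splitting of $H_{m,n}$ from its action on the Bass--Serre tree of $BS(m,n)$, and for (4) you realise each $B_i$ as the fundamental group of a one-vertex, one-loop sub-graph of groups, exactly as the paper does. The differences are confined to the elementary parts. For (2) the paper verifies by hand that $\langle a^d, t, t^a,\dots,t^{a^{d-1}}\rangle$ is normal (rewriting an arbitrary word $g$ and checking that $t^g$ and $(a^d)^g$ land back in it), whereas you invoke Reidemeister--Schreier with the Schreier transversal $\{1,a,\dots,a^{d-1}\}$; and for the edge-orbit count in (3) the paper exhibits an explicit fundamental domain and proves $e_i\sim_H e_j$ if and only if $d\mid i-j$, whereas you note that $\langle a^m\rangle\leq H$ together with normality of $H$ collapses the double cosets $H\backslash G/\langle a^m\rangle$ to the $d$ cosets of $H$. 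Both of your shortcuts are legitimate and arguably cleaner, while the paper's versions are more self-contained. The one point you assert without checking is that $t^{a^i}$ really is the Bass--Serre stable letter of the $i$-th loop; the paper supplies the one-line verification, namely that $t^{a^i}$ carries the base vertex $\langle a\rangle$ to the terminal vertex $a^it\langle a\rangle$ of the $i$-th edge of the fundamental domain.
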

    \begin{proof}
       Denote $H=H_{m,n}$ and $G=BS(m,n)$ for short. The first claim follows directly from the definition of $H$.
       
       Denote $H_0 = \langle a^d, t, t^a, \ldots, t^{a^{d-1}} \rangle \subseteq H$. We want to show that $H_0=H$. Since $t, a^d \in H_0$, it suffices to show that $H_0$ is normal, i.e. that for any $g \in G$ we have $t^g \in H_0$ and $(a^d)^g \in H_0$. 
       
       Note that $t^{a^k} \in H_0$ for all $k$. Indeed, let $k=ds+r$, where $0 \leq r \leq d-1$, then $$t^{a^k}=a^kta^{-k}=(a^d)^s(a^rta^{-r})(a^d)^{-s}=(a^d)^st^{a^r}(a^d)^{-s} \in H_0.$$

       Suppose that $g=a^{k_1}t^{\epsilon_1}a^{k_2}t^{\epsilon_2} \ldots a^{k_N}t^{\epsilon_n}a^{k_{N+1}}$, with $\epsilon_i \in \{ \pm 1 \}$ and $k_i \in \mathbb{Z}$. Then
       $$
       g= (t^{\epsilon_1})^{a^{k_1}} a^{k_1+k_2}t^{\epsilon_2} \ldots a^{k_N}t^{\epsilon_N}=(t^{\epsilon_1})^{a^{k_1}}(t^{\epsilon_2})^{a^{k_1+k_2}}\ldots(t^{\epsilon_N})^{a^{k_1+k_2+\ldots+k_N}}a^M=h_0 a^C, 
       $$
       where $M=k_1+k_2+\ldots+k_{N+1}$, $M=Sd+C$, $h_0 \in H_0$ and $0 \leq C \leq d-1$.
       Then $t^g=h_0a^Cta^{-C}h_0^{-1}=h_0t^{a^C}h_0^{-1} \in H_0$ and
       $(a^d)^g=h_0a^Ca^da^{-C}h_0^{-1}=h_0a^dh_0^{-1} \in H_0$. This proves the second claim.
       
       Now consider the action of $G=BS(m,n)$ on the Bass-Serre tree $T=T_G$ associated to the standard splitting of $G$. By Bass-Serre theory, the vertex set of $T$ is the set of right cosets $g \langle a \rangle$, $g \in G$, and the edges incident to the vertex $\langle a \rangle$ are the following: outgoing edges $e_0, e_1, \ldots, e_{m-1}$,  with $e_i$ going to the vertex $a^i t\langle a \rangle$, $i=0, \ldots, m-1$, and ingoing edges $f_0, f_1, \ldots, f_{n-1}$, with $f_j$ going from the vertex $a^j t^{-1} \langle a \rangle$, $j=0, \ldots, n-1$. The action is by left multiplication and all the other stars of vertices in $T_G$ are obtained by transferring the star of the vertex $\langle a \rangle$ by an element of $G$.
       
       Recall that $m=pd$ and $n=qd$. We claim that the set of edges $e_0, e_1, \ldots, e_{d-1}$ together with the vertex $w_0=\langle a \rangle$ is a fundamental domain for the action of $H$ on $T$.
       
       Indeed, first notice that $H$ acts transitively on the set of vertices of $T$: since $H, Ha, Ha^2, \ldots, Ha^{d-1}$ forms the set of cosets of $H$ in $G$ by definition of $H$, for any $g \in G$ we have $g=ha^k$ for some $0 \leq k \leq d-1$, $h \in H$, so $g \langle a \rangle = h \langle a \rangle$ and $h^{-1}$ takes $g \langle a \rangle$ to $\langle a \rangle$. It follows that any edge $e$ in $T$ can be taken by $H$ to one of the edges from the set $e_0, e_1, \ldots, e_{m-1}$, by taking the initial vertex of $e$ to $w_0$,  since the edge orientations are preserved by the action.
       
       Now we show that the edges $e_i$ and $e_j$ are in the same $H$-orbit if and only if $d \mid (i-j)$. Indeed, if $i-j=sd$ for some $s$ then the element $(a^d)^s \in H$ takes $e_j$ to $e_i$, since it fixes $w_0$ and takes the vertex $a^jt\langle a \rangle$ to the vertex $a^it \langle a \rangle$. On the other hand, if $he_i=e_j$ for some $h \in H$ then $h$ fixes $w_0$, so $h=a^{dl}$ for some $l$, and $ha^it\langle a \rangle =a^jt \langle a  \rangle$, so $a^{dl}=h=a^jta^Nt^{-1}a^{-i}$, for some $N$, therefore, $a^N=t^{-1}a^{i-j+dl}t$. This can happen in $BS(m,n)$ only if $d \mid i-j+dl$, so $d | i-j$, as desired.
       
       This shows that indeed the set of edges $e_0, e_1, \ldots, e_{d-1}$ together with the vertex $w_0=\langle a \rangle$ is a fundamental domain for the action of $H$ on $T$, and so the quotient graph $H \backslash T$ has one vertex and $d$ loops $E_1, \ldots, E_d$.
       
       Note that the stabilizer of $w_0$ in $H$ is equal to $\langle a^d \rangle$. If $u_i$ is the end vertex of $e_i$, $i=0, \ldots, d$, then the stabilizer of $u_i$ in $H$ is equal to $\langle ta^dt^{-1} \rangle$, since $H$ is a normal subgroup. The stabilizer of  the edge $e_i$ in $H$ (and in $G$) is equal to $\langle a^m \rangle = \langle ta^nt^{-1} \rangle$, which has index $p$ in $\langle a^d \rangle$ and index $q$ in $\langle ta^dt^{-1} \rangle$. It follows that $A(E_i)=p$ and $\Omega(E_i)=q$ for all $i=1, \ldots, d$, as desired. This proves the third claim.
       
       Finally, note that the element $t^{a^i}$ takes $w_0$ to $u_i$, so it can be taken as the Bass-Serre element corresponding to the edge $E_i$ in the induced splitting of $H$. It follows immediately that  $B_i= \langle a^d, t^{a^i} \rangle \cong BS(p,q)$, for $i=0, \ldots, d-1$, and the lemma is proven.
    \end{proof}

Thus, the question of commensurability of Baumslag-Solitar groups reduces to the question of commensurability of the groups $G^d_{p,q}$ with $gcd(p,q)=1$. 
Since the group $G^d_{p,q}$ does not have proper plateaus when $gcd(p,q)=1$, we can give a description of its finite index subgroups, which follows from \cite{L2} (one can also deduce it from \cite{D1}).

The following lemma states that finite index subgroups of $G^d_{p,q}$ can be all obtained through graph coverings of the bouquet of circles, see Figure \ref{fig:1} for an example of a finite index subgroup $K$ of the group $G_{1,p}^2$.

\begin{lemma}\label{l3}
    Let $K$ be a finite index subgroup of $G^d_{p,q}$, for some $d,p,q \geq 1$, with $gcd(p,q)=1$. Then $K$ is isomorphic to a GBS group defined by a GBS graph $\Gamma_K$ such that 
    \begin{enumerate}
            \item There is a map $\pi$ from $\Gamma_K$ to the bouquet of circles defining $G_{p,q}^d$, which is a covering map of directed graphs. In particular, each vertex of $\Gamma_K$ has degree $2d$, with $d$ incoming and $d$ outgoing positive edges. 
            \item For every positive edge $e$ in $\Gamma_K$ we have $A(e)=p$, $\Omega(e)=q$. In particular, if $q > p > 1$ then $\Gamma_K$ is reduced.
            \item The quotient of $K$ over the subgroup of all elliptic elements is a free group, defined by  $\Gamma_K$ as a covering of a bouquet of $d$ circles, i.e. it is isomorphic to the topological fundamental group of $\Gamma_K$.
        \end{enumerate}
\end{lemma}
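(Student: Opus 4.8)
The plan is to exploit the theory of deformation spaces together with Levitt's result that GBS groups without proper plateau have finite index subgroups corresponding to covers of the defining graph. Since $\gcd(p,q)=1$, the group $G_{p,q}^d$ has no proper plateau (one checks the plateau condition directly from the labels: every loop has labels $p,q$, and a prime dividing $A(e)=p$ cannot simultaneously be compatible with the $q$-labels, so no proper $p'$-plateau exists). By \cite[Corollary 6.6]{L2}, finite index subgroups of a GBS group without proper plateau correspond exactly to finite covers of the underlying GBS graph, with the labels lifted along the covering map. This is the conceptual engine behind all three claims.

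For the concrete argument I would proceed as follows. First I would realise $K$ via its action on the Bass-Serre tree $T$ of $G_{p,q}^d$: since $K$ has finite index, $K$ acts on $T$ with finite quotient graph $\Gamma_K = K\backslash T$, and this gives the induced graph-of-groups splitting of $K$. The natural projection $T \to \Gamma$, where $\Gamma$ is the bouquet of $d$ loops, descends to a map $\pi\colon \Gamma_K \to \Gamma$ because $K \leq G_{p,q}^d$; the key point is that $\pi$ is a \emph{covering map of directed graphs}, which follows from the no-plateau hypothesis (it guarantees that no collapse occurs and that the local structure of the Bass-Serre tree—$d$ incoming and $d$ outgoing edges at each vertex—is preserved in the quotient). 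This immediately gives the degree count in claim (1): each vertex of $\Gamma_K$ lifts the single vertex of $\Gamma$ and inherits its star, so has exactly $d$ incoming and $d$ outgoing positive edges, hence degree $2d$.

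For claim (2), since $\pi$ is a covering and labels lift along covers by \cite[Corollary 6.6]{L2}, every positive edge $e$ of $\Gamma_K$ carries the labels of its image, namely $A(e)=p$ and $\Omega(e)=q$. The reducedness assertion is then a direct application of the definition recalled in the Preliminaries: when $q>p>1$, every edge that is a loop in $\Gamma_K$ has both labels of absolute value larger than $1$; for non-loop edges we need $A(e)\neq\pm 1$ and $\Omega(e)\neq\pm 1$, which again holds since $p,q>1$. For claim (3), I would invoke the general fact recalled earlier in the paper: the modular homomorphism factors through the quotient of $K$ by the normal subgroup generated by all elliptic elements, and this quotient is the topological fundamental group of the underlying graph $\Gamma_K$. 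Since $\Gamma_K$ is a finite cover of a bouquet of $d$ circles, its topological fundamental group is free, of rank determined by the Euler characteristic of the cover.

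\textbf{The main obstacle} I anticipate is establishing rigorously that $\pi$ is a \emph{genuine covering of directed graphs} rather than merely a graph morphism that could fold or collapse edges. This is exactly where the no-proper-plateau hypothesis is essential: without it, finite index subgroups need not correspond to covers (as the paper emphasises for general GBS groups), and the induced graph could have vertices of the wrong degree or edges with labels that have been collapsed. I would therefore spend the core of the proof verifying the local bijection at each vertex: one must show that the $d$ positive loop-edges at the vertex of $\Gamma$ lift to exactly $d$ distinct outgoing edges at each vertex of $\Gamma_K$, with no identifications and no edge whose label has degenerated to $\pm 1$. Once this local-isomorphism property is secured—using the structure of the fundamental domain and the stabiliser computations analogous to those in the proof of Lemma~\ref{l1}—all three claims follow cleanly, and I do not expect the remaining verifications to present difficulty.
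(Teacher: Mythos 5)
Your proposal is correct and follows essentially the same route as the paper: both arguments rest on observing that $\gcd(p,q)=1$ rules out proper plateaus and then invoking Levitt's results (\cite[Corollary 6.6]{L2} for the covering description, with \cite[Lemma 6.4]{L2} supplying label preservation) together with the standard identification of the quotient by elliptic elements with the topological fundamental group of $\Gamma_K$. The ``main obstacle'' you flag (that $\pi$ is a genuine covering rather than a folded morphism) is exactly the content of the cited corollary, so the paper simply delegates it to \cite{L2} rather than reproving it via stabiliser computations.
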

\begin{proof}
   Note that since $p$ and $q$ are coprime, the GBS graph defining $G^d_{p,q}$ (which is a bouquet of circles) contains no proper plateau. By \cite[Corollary 6.6]{L2}, it follows that every finite index subgroup $K$ of $G^d_{p,q}$ can be represented by a labelled graph $\Gamma_K$ which is a (topological) covering of the bouquet of circles defining $G_{p,q}^d$. This proves the first claim.
   
   It follows from \cite{L2} that the corresponding covering map $\pi$ is an admissible map in the sense of \cite[Definition 6.1]{L2}, and since $\pi$ is a covering, it follows from \cite[Lemma 6.4]{L2} that $\pi$ preserves the labels. This proves the second claim.
   
    The third claim is well-known and easy to see, see \cite{L3}.
\end{proof}

   In fact, it is not hard to see that the graph $\Gamma_K$ is given by the induced splitting of $K$ under its action on the Bass-Serre tree of $G_{p,q}^d$ with respect to the natural splitting, see \cite{L2}.

\section{Structure of finite index subgroups of $G_{1,q}^d$}

We now turn to the case when $p=1, q>1, d>1$. In this section, we give a canonical way to describe finite index subgroups of $G_{1,q}^d$ and show in Section \ref{sec:iso} that this representation is in fact a normal form that allows us to solve the isomorphism problem for finite index subgroups of $G_{1,q}^d$ in an explicit way.

The subgroup $K$ still has the GBS structure described by Lemma \ref{l3}, but in the case we are considering the GBS graph $\Gamma_K$ might be not reduced, so we can apply collapsing moves to it, as the following lemma describes.

\begin{lemma}\label{collapse}
 Let $K$ be a finite index subgroup of $G^d_{1,n}$, for some $d,n \geq 2$, and $\Gamma_K$ be the corresponding covering GBS graph given by Lemma \ref{l3}. Let $k$ be the rank of the topological fundamental group of $\Gamma_K$. Then 
 \begin{enumerate}
 \item \label{collapseit1} There exists a spanning tree $S$ in $\Gamma_K$ with all the positive edges in the tree oriented towards a given vertex.
 \item \label{collapseit2} $K$ is isomorphic to a GBS group defined by a bouquet $B_K$ of $k$ circles $e_1, \ldots, e_k$, which is obtained by collapsing all the edges in $S$. More precisely, for each $i=1, \ldots,k$ the edge $e_i$ is the image under collapsing of $z_i$, where $z_1, \ldots, z_k$ are the reduced paths in $\Gamma_K$ generating the topological fundamental group of $\Gamma_K$ such that each path $z_i$ contains exactly one edge outside of $S$, which is positive.
  \item \label{collapseit3}  If $p(z_i)$ is the number of positive edges  in $z_i$, and $q(z_i)$ is the number of negative edges in $z_i$, then $A(e_i)=n^{q(z_i)}$ and $\Omega(e_i)=n^{p(z_i)}$, for each $i=1, \ldots, k$.
  \item \label{collapseit4} There exist $i_1 \neq i_2 \in \{ 1, \ldots, k \}$ such that $A(e_{i_1})=A(e_{i_2})=1$, and $\Omega(e_{i_1}) > 1$.
  \item \label{collapseit5} If $K \neq G^d_{1,n}$ then $k \geq 3$ and there exists $j \in \{1, \ldots, k \}$ such that $A(e_j)=n$. 
  \end{enumerate}
\end{lemma}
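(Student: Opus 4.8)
The plan is to read everything off the combinatorics of the covering graph $\Gamma_K$ supplied by Lemma \ref{l3}, using repeatedly that a positive edge carries $A=1$ at its origin and is therefore collapsible. Write $V$ for the number of vertices of $\Gamma_K$ (this equals the index $[G^d_{1,n}:K]$). By Lemma \ref{l3} each vertex has exactly $d$ outgoing and $d$ incoming positive edges, so $\Gamma_K$ has $dV$ positive edges and its topological fundamental group has rank
\[
k = dV - (V-1) = (d-1)V + 1 .
\]
For part \eqref{collapseit1} I would first note that $\Gamma_K$ is connected and balanced (in-degree $=$ out-degree $=d$ at every vertex), hence strongly connected; in particular every vertex admits a directed path to any chosen root $r$. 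A search tree in the reversed graph, with its edges reversed back, then produces a spanning tree $S$ all of whose positive edges point towards $r$, i.e. an in-arborescence with sink $r$.

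For \eqref{collapseit2} I would use that every positive edge of $S$ has $A=1$, so it is collapsible, and that collapse moves are elementary deformations and hence preserve $K$ up to isomorphism. Collapsing the $V-1$ edges of $S$ leaves a single vertex with $dV-(V-1)=k$ loops, one for each positive edge $f_i$ outside $S$; the loop $e_i$ is the image of the standard generator $z_i$ of $\pi_1(\Gamma_K)$ attached to $f_i$, namely the reduced path running through $S$ from $r$ to $\alpha(f_i)$, across $f_i$, and back through $S$ from $\omega(f_i)$ to $r$. The heart of the argument is \eqref{collapseit3}. Assigning a generator $a_v$ to the vertex group at each $v$, the relation imposed by collapsing a tree edge $e\colon \alpha(e)\to\omega(e)$ (where $A(e)=1$, $\Omega(e)=n$, and $\omega(e)$ is the parent, nearer $r$) is $a_{\alpha(e)}=a_{\omega(e)}^{\,n}$. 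Because $S$ is an in-arborescence, iterating along the directed tree path from $v$ to $r$ gives the uniform identification $a_v = a_r^{\,n^{\ell(v)}}$, where $\ell(v)$ is the directed tree-distance from $v$ to $r$. Reading off the two embeddings of the edge group of $f_i$ then yields $A(e_i)=n^{\ell(\alpha(f_i))}$ and $\Omega(e_i)=n^{\ell(\omega(f_i))+1}$. It remains to match these exponents to the edge counts along $z_i$: the $r$-to-$\alpha(f_i)$ stretch traverses $\ell(\alpha(f_i))$ tree edges backwards (negative edges), $f_i$ contributes one positive edge, and the $\omega(f_i)$-to-$r$ stretch traverses $\ell(\omega(f_i))$ tree edges forwards (positive edges); hence $q(z_i)=\ell(\alpha(f_i))$ and $p(z_i)=\ell(\omega(f_i))+1$, giving $A(e_i)=n^{q(z_i)}$ and $\Omega(e_i)=n^{p(z_i)}$.

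With \eqref{collapseit3} in hand, parts \eqref{collapseit4} and \eqref{collapseit5} become bookkeeping on $S$. Since $r$ is the sink of the in-arborescence it has no outgoing tree edge, so all $d\ge 2$ of its outgoing positive edges lie outside $S$; each such $f_i$ has $\alpha(f_i)=r$, so $q(z_i)=\ell(r)=0$ and $A(e_i)=1$, while $p(z_i)\ge 1$ (as $z_i$ contains the positive edge $f_i$) forces $\Omega(e_i)\ge n>1$. Choosing two of these $d$ edges proves \eqref{collapseit4}, and in fact shows $\Omega(e_i)>1$ for every $i$. For \eqref{collapseit5}, if $K\ne G^d_{1,n}$ then $V\ge 2$, whence $k=(d-1)V+1\ge 2d-1\ge 3$. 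Moreover $V\ge 2$ together with connectivity guarantees a vertex $v$ at tree-depth $\ell(v)=1$ (a tree in-neighbour of $r$); since $v$ has a unique outgoing tree edge but $d\ge 2$ outgoing positive edges, it has an outgoing non-tree edge $f_j$ with $\alpha(f_j)=v$, so $q(z_j)=\ell(v)=1$ and $A(e_j)=n$.

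The only genuinely delicate point is \eqref{collapseit3}: one must verify that the collapse, carried out consistently towards the root, really produces the uniform identification $a_v=a_r^{\,n^{\ell(v)}}$, and that the two labels of each surviving loop are extracted with the correct orientation. Everything else is either a standard fact about balanced connected digraphs (for \eqref{collapseit1}) or a direct count of positive versus negative edges along tree paths (for \eqref{collapseit4}--\eqref{collapseit5}); indeed the orientation condition built into \eqref{collapseit1} is precisely what makes the exponent bookkeeping in \eqref{collapseit3} monotone and clean.
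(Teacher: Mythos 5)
Your proposal is correct and follows essentially the same route as the paper: build a spanning in-arborescence towards a chosen root using the strong connectivity of the balanced covering digraph, collapse its edges (each carrying $A$-label $1$), and count positive versus negative tree edges along the paths $z_i$ to obtain the exponents, with parts (4) and (5) read off from the outgoing non-tree edges at the root and at a depth-one vertex exactly as in the paper. The only cosmetic difference is that you derive part (3) from the uniform identification $a_v = a_r^{\,n^{\ell(v)}}$ of vertex-group generators, whereas the paper tracks how collapsing the subpaths $y_i$ and $x_i$ multiplies $A(E_i)$ and $\Omega(E_i)$ by powers of $n$; these are the same computation.
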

\begin{proof}
        Recall that by Lemma \ref{l3} the subgroup $K$ can be represented by a GBS graph $\Gamma_K$ covering a bouquet of circles which defines $G_{p,q}^d$. We denote by $D_i$ the set of vertices of $\Gamma_K$ projecting into a given edge $e_i$ of $G_{p,q}^d$, for $i=1, \ldots, d$. Thus, each connected component of  $D_i$ is a simple directed cycle, and each vertex of $\Gamma_K$ belongs to exactly one such cycle.       Note that $\Gamma_K$ is connected as a directed graph, i.e. for any vertices $u$, $v$ of $\Gamma_K$ there is a path from $u$ to $v$ which traverses all edges in the positive direction.
        
        We now construct a spanning tree as in \eqref{collapseit1}. Start with any vertex $v$ and let $S_1$ be just the vertex $v$. Let $N$ be the number of vertices of $\Gamma_K$. We claim that for each $i=1, \ldots, N$ there exists a subtree $S_i$ of $\Gamma_K$ with all edges oriented towards $v$ which has $i$ vertices. Indeed, by induction let it exist for $i \leq i_0 < N$. Let $w$ be any vertex of $\Gamma_K$ outside of $S_{i_0}$. Let $p$ be a path from $w$ to $v$ consisting of only positive edges, which exists since $\Gamma_K$ is connected as a directed graph. Let $u$ be the last vertex on this path which is not in $S_{i_0}$, and $e$ be the edge coming after $u$ in $p$. Then we can add $u$ together with $e$ to $S_{i_0}$ and obtain $S_{i_0+1}$. In the end we get $S_N$ which is the required spanning tree, so \eqref{collapseit1} holds.
        
        We now prove \eqref{collapseit2}. Recall that for every edge $e$ of $\Gamma_K$ we have $A(e)=1$ and $\Omega(e)=n$, by Lemma \ref{l3}. We apply collapsing moves to the edges of $S$ in any order. When we collapse an edge $e$ with $A(e)=1$ and $\Omega(e)=C$ for some $C \geq 1$, beginning in $u_1$ and ending in $u_2$, the vertices $u_1$ and $u_2$ get identified, all the labels of edges incident to $u_1$ at $u_1$ get multiplied by $C$, and all the other labels remain unchanged. Note that by definition of $S$ at every vertex of $S$ except the base vertex $v$ there is just one outgoing positive edge which is in $S$, and there is no outgoing positive edge in $S$ from $v$. Therefore, after collapsing any edge in $S$, all the remaining positive edges in $S$ will still have an $A$-label equal to 1. This implies that we can continue collapsing until all the edges from $S$ are collapsed, and what we get is a bouquet of $k$ circles $e_1, \ldots, e_k$. It is immediate that $e_i$ is the image under collapsing of $z_i$, as in the statement, for every $i=1, \ldots, k$. This proves \eqref{collapseit2}.
        
        We now show \eqref{collapseit3}. Consider the path $z_i$ for some fixed $i=1, \ldots, k$. Note that $z_i=y_i^{-1}E_ix_i$, where $E_i$ is a positive edge outside of $S$ and $x_i, y_i$ are paths in $S$ with all edges positive (each of them might be empty; they might also have edges in common). Then $p(z_i)=| x_i | +1$ and $q(z_i)=|y_i|$, where $|p|$ denotes the edge-length of a path $p$. It is easy to see that collapsing all the edges in $y_i$ makes $A(E_i)$ multiply by $n^{|y_i|}$. 
        Similarly, collapsing all the edges in $x_i$ makes $\Omega(e_i)$ multiply by $n^{|x_i|}$. Collapsing all the other edges in $S$ does not affect the labels of $E_i$; also, collapsing an edge which is in $x_i$ but not in $y_i$ does not affect $A(E_i)$ and collapsing an edge which is in $y_i$ but not in $x_i$ does not affect $\Omega(E_i)$. 
        Since $A(E_i)=1$, $\Omega(E_i)=n$, we get $A(e_i)=n^{|y_i|}=n^{q(z_i)}$ and $\Omega(e_i)=n^{|x_i|+1}=n^{p(z_i)}$, as required. This proves \eqref{collapseit3}.
        
        Since $d \geq 2$, there are at least two positive edges beginning in the base vertex $v$, and all positive edges beginning in $v$ are not in $S$. It follows that there are at least two paths among $z_1, \ldots, z_k$ which start from an edge outside of $S$. Suppose these are $z_{i_1}$ and $z_{i_2}$. These paths consist of only positive edges, so $q(z_{i_1})=q(z_{i_2})=0$, and it follows from \eqref{collapseit3} that $A(e_{i_1})=A(e_{i_2})=1$. It is immediate also that $\Omega(e_{i_1}), \Omega(e_{i_2})>1$. This proves \eqref{collapseit4}.
        
        Now suppose that $K \neq G^d_{1,n}$. It follows that $\Gamma_K$ has at least two vertices, since if it only had one vertex, it would have $d$ oriented loops at this vertex with all the $A$-labels equal to $1$ and all the $\Omega$-labels equal to $n$, and so we would have $K=G^d_{1,n}$. It follows that $k \geq 3$, since $d \geq 2$.
        Also, there is at least one positive edge in $S$. Let $e$ be a positive edge of $S$ ending in the base vertex $v$, and suppose $e$ starts in a vertex $w$. Let $e'$ be any positive edge starting in $w$ distinct from $e$, such an edge exists since $k \geq 2$. Then $e' \notin S$ by the definition of $S$. Thus there exists a path among $z_1, \ldots, z_k$, which is of the form $z_j=e^{-1}e'p$, where $p$ is some path in $S$ with all edges positive. Then $q(z_j)=1$, and so by \eqref{collapseit3} $A(e_j)=n$, as required. This proves \eqref{collapseit5} and the lemma, see Figure \ref{fig:1}.
\end{proof}

\begin{center}
\begin{figure}[ht]
\begin{tikzpicture}
    \node (x) at (0,0) [point];
    \node (y) at (2,0) [point];
    \node (z) at (0,1.5) [point];
    \node (t) at (2,1.5) [point];
    \node (vv) at (2.2,-0.2) {\footnotesize $v$};

    \path[->] (y) edge (x);
    \path[->,line width=1.25] (x) edge (z);
    \path[->,line width=1.25] (z) edge (t);
    \path[->,line width=1.25] (t) edge (y);
   
    \path[->,dashed] (z) edge[bend left=50] (x);
    \path[->,dashed] (x) edge[bend left=50] (z);
    \path[->,dashed] (t) edge[bend left=50] (y);
    \path[->,dashed] (y) edge[bend left=50] (t);
    
   \node (x1) at (-0.3,0.1) {\footnotesize $1$};
   \node (x2) at (-0.1,0.3) {\footnotesize $1$};
   \node (x3) at (0.45,0.3) {\footnotesize $p$};
   \node (x4) at (0.3,-0.2) {\footnotesize $p$};

   \node (y1) at (1.8,-0.2) {\footnotesize $1$};
   \node (y2) at (1.6,0.3) {\footnotesize $1$};
   \node (y3) at (2.15,0.5) {\footnotesize $p$};
   \node (y4) at (2.4,0.2) {\footnotesize $p$};
   
   \node (z1) at (-0.4,1.3) {\footnotesize $p$};
   \node (z2) at (0.35,1.3) {\footnotesize $1$};
   \node (z3) at (0.15,1) {\footnotesize $p$};
   \node (z4) at (0.2,1.7) {\footnotesize $1$};

   \node (t1) at (1.5,1.7) {\footnotesize $p$};
   \node (t2) at (2.3,1.4) {\footnotesize $1$};
   \node (t3) at (2.1,1.2) {\footnotesize $1$};
   \node (t4) at (1.8,1.1) {\footnotesize $p$};

  \node (g1) at (1, -0.5){};
  \node (g2) at (1, -1.8){};
  \path[line width=1.5] (g1) edge (g2);

  \node (u) at (1,-3) [point];

 \path (u) edge[scale=3,in=-150,out=150,loop] (u);
 \path[dashed] (u) edge[scale=3,in=30,out=-30,loop] (u);

   \node (u1) at (0.9,-2.75) {\footnotesize $1$};
   \node (u2) at (0.8,-3.3) {\footnotesize $p$};
   \node (u3) at (1.25,-2.75) {\footnotesize $p$};
   \node (u4) at (1.25,-3.3) {\footnotesize $1$};

 \node (l33) at (1,2.2) {$\Gamma_K$ in Lemma \ref{l3}};
 \node (l322) at (1,-3.8) {\footnotesize Defining graph of $G_{1,p}^2$};

  \node (g3t) at (3.7, 0.9){\footnotesize reducing};
  \node (g3) at (2.7, 0.75){};
  \node (g4) at (5, 0.75){};
  \path[line width=1.5] (g3) edge (g4);
 
 \node (v) at (6.5,0.75) [point];
 \path (v) edge[scale=3,out=0,in=60, loop] (v);
 \path[dashed] (v) edge[scale=3,out=70,in=130, loop] (v);
 \path[dashed] (v) edge[scale=3,out=140,in=200, loop] (v);
 \path[dashed] (v) edge[scale=3,out=210,in=270, loop] (v);
 \path[dashed] (v) edge[scale=3,out=280,in=340, loop] (v);

\node (v1) at (6.7, 1.5){\footnotesize $p$};
\node (v11) at (6,1.3){\footnotesize $p$};

\node (v3) at (7.2, 0.7){\footnotesize $p^4$};
\node (v31) at (7.2, 1.5){\footnotesize $1$};

\node (v2) at (6.7, -0.1){\footnotesize $p^3$};
\node (v21) at (7.5, 0.25){\footnotesize $p^3$};

\node (v4) at (5.75, 0.1){\footnotesize $p^2$};
\node (v41) at (6.35, -0.2){\footnotesize $p^4$};

\node (v5) at (5.8, 1.25){\footnotesize $1$};
\node (v51) at (5.45, 0.5){\footnotesize $p^2$};

\node (l41) at (6.5,2.2) {Lemma \ref{collapse}};

\node (g5) at (6.5, -0.5){};
\node (g6) at (6.5, -1.8){};
 
 \path[line width=1.5] (g5) edge (g6);
\node (w) at (7.05,-0.85) {\footnotesize normal};
\node (w) at (7.05,-1.1) {\footnotesize form};
\node (w) at (0.3,-1.1) {\footnotesize cover};

 \node (w) at (6.5,-3) [point];

 \path (w) edge[scale=3,out=0,in=60, loop] (w);
 \path[dashed] (w) edge[scale=3,out=70,in=130, loop] (w);
 \path[dashed] (w) edge[scale=3,out=140,in=200, loop] (w);
 \path[dashed] (w) edge[scale=3,out=210,in=270, loop] (w);
 \path[dashed] (w) edge[scale=3,out=280,in=340, loop] (w);

\node (w1) at (6.7, -2.25){\footnotesize $p$};
\node (w11) at (6,-2.35){\footnotesize $p$};

\node (w3) at (7.2, -3.05){\footnotesize $1$};
\node (w31) at (7, -2.25){\footnotesize $1$};

\node (w2) at (6.9, -3.95){\footnotesize $p$};
\node (w21) at (7.45, -3.5){\footnotesize $p$};

\node (w4) at (5.75, -3.65){\footnotesize $1$};
\node (w41) at (6.35, -3.95){\footnotesize $1$};

\node (w5) at (5.8, -2.5){\footnotesize $1$};
\node (w51) at (5.45, -3.25){\footnotesize $p^2$};

\node (l41) at (6.5,-4.3) {Lemma \ref{slide}};

\end{tikzpicture}
\caption{Example illustrating Lemmas \ref{l1}, \ref{l3}, \ref{collapse} and \ref{slide}. The bold edges in the top left graph represent the spanning tree $S$, as in Lemma \ref{collapse}.} \label{fig:1}
\end{figure}
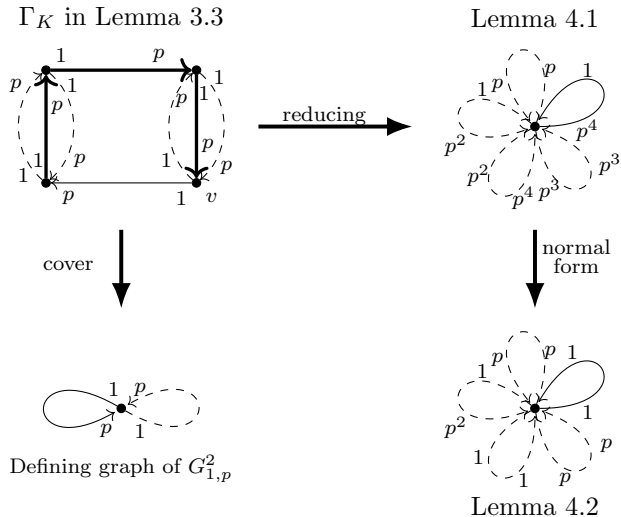
\end{center}

After collapsing to a bouquet of circles, as described by Lemma \ref{collapse}, we can adjust the petal labels by applying slide moves, as described in the following lemma, see Figure \ref{fig:1} for an example.

In the notation of Lemma \ref{collapse}, let
  \begin{equation}\label{m}
  m=\gcd (|p(z_1)-q(z_1)|, |p(z_2)-q(z_2)|, \ldots, |p(z_k)-q(z_k)|).
  \end{equation}
  It follows from Lemma \ref{collapse} that there exists $i$ such that $q(z_i)=0$ and $p(z_i)>0$, so $m \geq 1$.
\begin{lemma}\label{slide}
 In the above notation, the following holds.
 The subgroup $K$ is isomorphic to a GBS group defined by a bouquet $B'_K$ of circles $f_1, \ldots, f_k$, such that
\begin{enumerate}
     \item \label{slideit1}
 $A(f_1)=1$, $\Omega(f_1)=n^m$.
  \item \label{slideit2}
  For each $i=2, \ldots, k$ we have $A(f_i)=\Omega(f_i)=n^{p_i}$, where $0 \leq p_i \leq m-1$. 
  \item \label{slideit3}
 There exists $i_1 \in \{2, \ldots, k \}$ such that $A(f_{i_1})=\Omega(f_{i_1})=1$. 
  \item  \label{slideit4} 
If also $K \neq G^d_{1,n}$ and $m \geq 2$, then $k \geq 3$ and there exists $i_2 \in \{2, \ldots, k \}$ such that $A(f_{i_2})=\Omega(f_{i_2})=n$.
 \end{enumerate}
\end{lemma}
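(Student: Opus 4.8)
\emph{Plan.} Throughout I keep the bouquet $B_K$ of petals $e_1,\dots,e_k$ from Lemma \ref{collapse} and write $A(e_i)=n^{q_i}$, $\Omega(e_i)=n^{p_i}$ with $q_i=q(z_i)\ge 0$ and $p_i=p(z_i)\ge 1$; set $d_i=p_i-q_i$. By Remark \ref{rem} the petal $e_i$ contributes $n^{-d_i}$ to the modulus, so by \eqref{m} we have $m=\gcd(|d_1|,\dots,|d_k|)$ and $M(K)=\langle n^m\rangle$; in particular $m\mid d_i$ for every $i$, and this persists at every stage. I first record the effect of the loop slide move of Section 2 on a bouquet: sliding the terminus of a petal $e_i$ once along a petal $e_j$ leaves all labels except $\Omega(e_i)$ unchanged, replaces $p_i$ by $p_i+d_j$ (forward) or $p_i-d_j$ (backward), and preserves $K$; the move is admissible provided the edge group at the sliding end embeds into that of $e_j$, which here reads $q_j\le p_i$ (forward) or $p_j\le p_i$ (backward). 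Sliding an origin is analogous, with $q_i$ in place of $p_i$. Since I want to keep all origins under control, I shall use \emph{terminus} slides only until the very end. A petal with $A=n^{0}=1$, i.e. with $q=0$, is a free \emph{pivot}: a forward terminus slide along it is always admissible and raises the target exponent by the pivot's $d$-value, while a backward slide lowers it by the same amount whenever the target exponent is large enough.

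\emph{Main reduction.} Because a terminus slide of $e_i$ along $e_j$ sends $d_i\mapsto d_i\pm d_j$, the whole process is the Euclidean algorithm applied to the integer vector $(d_1,\dots,d_k)$, whose gcd is $m$. By Lemma \ref{collapse}\,\eqref{collapseit4} there are two petals with $q=0$; fix one of them, say $e_{i_0}$, as the pivot. Running the Euclidean steps, I reduce $(d_1,\dots,d_k)$ to the vector with $d_{i_0}=m$ and $d_i=0$ for all $i\ne i_0$; since $q_{i_0}=0$ is never moved, the pivot ends as $f_1:=e_{i_0}=(q,p)=(0,m)$, that is $A(f_1)=1$, $\Omega(f_1)=n^m$, which is \eqref{slideit1}, while every other petal becomes balanced, $p_i=q_i$, i.e. $A(e_i)=\Omega(e_i)$. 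The one point requiring care — and the main obstacle — is admissibility: a subtracting (backward) slide needs the sliding exponent to dominate the one being subtracted, and the relevant quantities are the $p_i$'s, not the $d_i$'s. This is handled by the pivot: whenever a backward slide is blocked, one first enlarges the obstructing $\Omega$-exponent by forward slides along $e_{i_0}$ (each unconditional, and changing the $d$-value only by a multiple of $d_{i_0}$, which is accounted for in the algorithm), and then performs the subtraction. A clean instance is the interaction of the two $q=0$ petals, where all exponents equal the $d$-values and the reduction is the plain subtractive Euclidean algorithm.

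\emph{Special petals and final normalization.} Since I used only terminus slides, every origin exponent $q_i$ retains its original value, and each non-pivot petal is now the balanced petal $(q_i,q_i)$. In particular the \emph{second} $q=0$ petal of Lemma \ref{collapse}\,\eqref{collapseit4} has become $(0,0)$, i.e. $A=\Omega=1$, giving \eqref{slideit3}. If moreover $K\ne G^d_{1,n}$ and $m\ge 2$, then $k\ge 3$ and Lemma \ref{collapse}\,\eqref{collapseit5} provides a petal with $A=n$, i.e. $q=1$; its origin having been preserved, it is now the balanced petal $(1,1)$, that is $A=\Omega=n$, which is \eqref{slideit4} (and $1\ne 0$ distinguishes it from the trivial petal since $m\ge 2$). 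Finally, to obtain \eqref{slideit2} I reduce every balanced petal $(c,c)$ modulo $m$: sliding \emph{both} of its ends backward along $f_1=(0,m)$ lowers $c$ to $c-m$ and is admissible as soon as $c\ge m$ (here $n^m\mid n^c$), so after finitely many double slides each surviving petal satisfies $0\le c\le m-1$. The petals $(0,0)$ and, when present, $(1,1)$ already lie in this range and are left untouched, so all four assertions hold simultaneously for the resulting bouquet $B'_K$.
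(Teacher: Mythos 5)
Your overall strategy --- run a Euclidean algorithm on the exponent differences $d_i=p(z_i)-q(z_i)$ by sliding petals over the $A=1$ petal and vice versa, and only at the very end reduce the balanced petals modulo $m$ --- is the same as the paper's, and your derivations of \eqref{slideit1}, \eqref{slideit3} and \eqref{slideit4} from Lemma \ref{collapse} match the paper's. The gap is in your ``Main reduction'', and it is caused precisely by your decision to use terminus slides only. A slide of the pivot's terminus over a petal $e_j$ with labels $(n^{q_j},n^{p_j})$ is admissible only if $p_j\le a$ (one direction) or $q_j\le a$ (the other), where $a$ is the pivot's current $\Omega$-exponent; since you never slide origins, $q_j$ keeps its original value, which may be much larger than $a$, and your proposed remedy --- ``enlarge the obstructing $\Omega$-exponent by forward slides along $e_{i_0}$'' --- is unavailable exactly here, because the petal being slid is the pivot $e_{i_0}$ itself and a loop cannot be slid over itself. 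Concretely, with pivot $(1,n^{2})$ and a petal $(n^{100},n^{101})$ (so $d_j=1$), neither direction of sliding the pivot's terminus over $e_j$ is admissible, and boosting $a$ by $+|d_j|$ and then subtracting it just returns you to where you started. One can sometimes escape by keeping the second $q=0$ petal un-balanced and using it as an auxiliary booster, but that argument is not in your write-up; nor do you prove termination --- the paper's procedure terminates because the pivot exponent $a_i$ strictly decreases at each round, whereas your boost-then-subtract loop has no such monotone quantity, and ``accounted for in the algorithm'' does not substitute for one.

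The paper avoids the problem by sliding \emph{both} ends of each non-pivot petal over the pivot first, bringing both of its exponents into $[0,a-1]$ before attempting to decrease $a$; this makes every subsequent slide admissible and yields the strictly decreasing $a_i$. The price is that origins change, but the paper still recovers your claims \eqref{slideit3} and \eqref{slideit4} by tracking that the balanced exponent stays congruent to $q(z_i)$ modulo the current gcd, hence ends at $q(z_i)\bmod m$, which is $0$ (resp.\ $1$) for the petals supplied by Lemma \ref{collapse}. To repair your version you must either supply the missing admissibility-and-termination argument for terminus-only slides or drop that restriction and track residues as the paper does.
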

\begin{proof}
We know that $K$ is isomorphic to a GBS group defined by a bouquet $B_K$ of $k$ circles $e_1, \ldots, e_k$, described by Lemma \ref{collapse}. By Lemma \ref{collapse}\eqref{collapseit4}, in which without loss of generality we can assume that $i_1=1$, we have $q(z_1)=0$, and $p(z_1)>0$. Let $m_1=p(z_1)$ and $m_i=\\gcd(m_{i-1}, |p(z_i)-q(z_i)|)$, for $i=2, \ldots, k$. 
Note that $m_k=m$, and that each $m_i$, $1 \leq i \leq k$, is a multiple of $m$.

The idea is to apply multiple slide moves to $B_K$ in order to reach the desired bouquet of circles $B_K'$. We can successively apply these slides for pairs $(e_1,e_2), (e_1,e_3), \ldots, (e_1,e_k)$ to make $e_1$ into a loop with $A$-label equal to $1$ and $\Omega$-label equal to $n^m$, and then slide all the other loops over it to make their $A$- and $\Omega$- labels as described in \eqref{slideit2}.

We first describe abstractly the procedure we are going to apply later to each pair $(e_1,e_i)$, $i=2, \ldots, k$. This is a modification of Euclid's algorithm. Suppose that we have a GBS group which includes a vertex $w$ with 2 loops, $E$ and $F$, with $A(E)=1$, $\Omega(E)=n^a$, $A(F)=n^b$ and $\Omega(F)=n^c$, where $a \geq 1$, $b,c \geq 0$. Denote $a_0=a, b_0=b, c_0=c, E_0=E, F_0=F$. Apply the following for $i=0$.

{\it Step 1}.
Let $b_i=a_is_i+b_{i+1}$, $c_i=a_it_i+c_{i+1}$, where $0 \leq b_{i+1}, c_{i+1} \leq a_i-1$. Slide the edge $F_i$ over the edge $E_i$ $s_i$ times on the one side (of $F_i$) and $t_i$ times on the other side (of $F_i$), so that the obtained (from $F_i$) edge $F_{i+1}$ has $A(F_{i+1})=n^{b_{i+1}}$ and $\Omega(F_{i+1})=n^{c_{i+1}}$, while the edge $E_i$ is not affected. Now, if $b_{i+1}=c_{i+1}$ then let $E_{i+1}=E_i$,  $a_{i+1}=a_i$ and terminate the procedure. 

{\it Step 2}. If $b_{i+1}>c_{i+1}$, let $a_i-c_{i+1}=\lambda_i (b_{i+1}-c_{i+1})+\mu_i$, where $1 \leq \mu_i \leq (b_{i+1}-c_{i+1})$, let $a_{i+1}=a_i-\lambda_i (b_{i+1}-c_{i+1})=\mu_i+c_{i+1} \geq 1$, and slide the edge $E_i$ over the edge $F_{i+1}$ $\lambda_i$ times, so that the obtained (from $E_i$) edge $E_{i+1}$ has $A(E_{i+1})=1$ and $\Omega(E_{i+1})=n^{a_{i+1}}$, while the edge $F_{i+1}$ is not affected. Note that in this case $1 \leq a_{i+1} \leq b_{i+1}$. If $c_{i+1}>b_{i+1}$ apply the same but with the roles of $b$'s and $c$'s interchanged. 

Now keep applying the above procedure (both steps) with $i$ increased by $1$ at each step, until it terminates. Note that for each $i$ we have $a_i > b_{i+1}$, so (unless the procedure terminates at this step) we have $a_i-c_{i+1} > b_{i+1}-c_{i+1}$ and so $\lambda_i \geq 1$, therefore, $a_{i+1} \leq a_i-1$, and so the procedure has to terminate.

    Let $I$ be the index for which the procedure terminates and $N=I+1$. We let $(E_N, F_N)$, with their labels, be the output of our procedure. It is immediate that $A(E_N)=1$ and $A(F_N)=\Omega(F_N)$. Note that for all $0 \leq i \leq N$ we have $A(E_i)=1$, $\Omega(E_i)= n^{a_i}$, $A(F_i)=n^{b_i}$ and $\Omega(F_i)=n^{c_i}$.

    Let $d=\gcd(a,|b-c|)$. We claim that $a_N=d$,
    $0 \leq b_N=c_N \leq d-1$ and $b_N = b \mod d$. 
    
    We first show that $a_N \mid a$ and $a_N \mid |b-c|$. We claim that $a_N \mid a_i$, $a_N \mid |b_i-c_i|$, $i=0, \ldots, N$. It is obvious for $i=N$, since $b_N=c_N$. Suppose by (reverse) induction it is true for $i=j+1$, where $0 \leq j \leq N-1$, then 
    $$
    \begin{array}{ll}
    a_j=a_{j+1}+\lambda_j \cdot|b_{j+1}-c_{j+1}|,& \hbox{ so } a_N \mid a_j,\hbox{ and}\\
    b_{j}-c_{j}=a_j(s_{j}-t_{j})+(b_{j+1}-c_{j+1}),& \hbox{ so } a_N \mid |b_{j}-c_{j}|.
  \end{array}
  $$
  For $i=0$ we get  $a_N \mid a$ and $a_N \mid |b-c|$.

    Suppose $D$ is such that $D \mid a$ and $D \mid |b-c|$. We show that in this case $D \mid a_N$. Indeed, $D \mid a_0$ and $D \mid |b_0-c_0|$, and we can see by induction that $D \mid a_i, |b_i - c_i|$, for $0 \leq i \leq N$: suppose it is true for $i=j$, $0 \leq j \leq N-1$, then we have 
    $$
    \begin{array}{ll}
    b_{j+1}-c_{j+1}=(b_j-c_j)-a_j(s_j-t_j),& \hbox{ so } D \mid b_{j+1}-c_{j+1},\hbox{ and}\\
    a_{j+1}=a_j-\lambda_j | b_{j+1}-c_{j+1}| ,& \hbox{ so } D \mid a_{j+1}.
    \end{array}
    $$
    Therefore, $D \mid a_N$. Since $a_N \mid a$ and $a_N \mid |b-c|$, it follows that $a_N=d$.

    Note that $0 \leq b_N=c_N \leq a_{N-1}-1=a_N-1=d-1$ by construction. Moreover, since $b_i=a_is_i+b_{i+1}$, and $d \mid a_i$, $i=0, \ldots, N-1$, we have that $b_i = b_{i+1} \mod d$. Since $b_0=b$, we get that $b_n = b \mod d$, as required. The same holds with $c$'s instead of $b$'s.
     Thus, $A(E_N)=1$, $\Omega(E_N)=n^d$, and $A(F_N)=\Omega(F_N)=n^l$, where $l$ is the residue of $b$ (and of $c$) modulo $d$.
    
    Recall that we have $A(e_1)=1$, $\Omega(e_1)=n^{p(z_1)}$ with $p(z_1) \geq 1$, $A(e_2)=n^{q(z_2)}$, $\Omega(e_2)=n^{q(z_2)}$. We now apply the above procedure (i.e., corresponding slide moves) to $e_1$ and $e_2$ in $B_K$, without affecting the other edges. We get a bouquet of circles $e_1^{2}, e_2', e_3, \ldots, e_k$, such that 
    $$
    A(e_1^2)=1, \; \Omega(e_1^2)=m_2=\gcd(p(z_1), |p(z_2)-q(z_2)|),  \; A(e_2')=\Omega(e_2')=n^{l_2}, 
    $$
    where $l_2 = p(z_2)=q(z_2) \mod m_2$, $0 \leq l_2 \leq m_2-1$. Then apply the above procedure to $(e_1^2, e_3)$, getting a bouquet of circles  $(e_1^3, e_2', e_3', e_4, \ldots, e_k)$, with $A(e_1^3)=1$, $\Omega(e_1^3)=n^{m_3}$, $A(e_3')=\Omega(e_3')=n^{l_3}$, where $l_3=p(z_3)=q(z_3) \mod m_3$,  $0 \leq l_3 \leq m_3-1$.
   Continue successively applying the procedure to all the pairs $(e_1^i,e_{i+1})$, $i=1, \ldots,k-1$. 
   In the end we get a bouquet of circles $(f_1,e_2',e_3', \ldots, e_k')$ such that the following holds:  $A(f_1)=1$, $\Omega(f_1)=n^{m}$, $A(e_i')=\Omega(e_i')=n^{l_i}$, where $l_i=p(z_i)=q(z_i) \mod m_i$,  $0 \leq l_i \leq m_i-1$, for all $i=2,\ldots, k$. Here $f_1=e_1^k$.
   
   The following applies to every $i=2, \ldots, m$. Since $m \mid m_i$, we have $l_i=p(z_i)=q(z_i) \mod m$. Let  $l_i=mx_i+p_i$, where $0 \leq p_i \leq m-1$; then $p_i=p(z_i)=q(z_i) \mod m$.
Slide the edge $e_i'$ over $f_1$ $x_i$ times, so that the obtained edge $f_i$ has $A(f_i)=\Omega(f_i)=n^{p_i}$; the edge $f_1$ is not changed. Thus, we obtained a bouquet $B_K'$ of circles $(f_1, f_2, \ldots, f_k)$, whose GBS group is $K$.

We claim that $B_K'$ satisfies the assertions of the lemma. Indeed, \eqref{slideit1} and \eqref{slideit2} hold by construction. Claim \eqref{slideit3} follows immediately from Lemma \ref{collapse}\eqref{collapseit4}, since $i_2 \neq i_1=1$ satisfies $p_{i_2}=q(z_{i_2})=0$.  Similarly, claim \eqref{slideit4} follows from Lemma \ref{collapse}\eqref{collapseit5}, since for $K \neq G^d_{1,n}$ and $m \geq 2$ we have $p_j=q(z_j)=1$. This proves the lemma.
\end{proof}

\section{Isomorphism criterion for some GBS groups}  \label{sec:iso}

In this section, we show that the description of finite index subgroups provided in Lemma \ref{slide} is in fact a normal form. More generally, we introduce a class of GBS groups and solve the isomorphism problem for this class.

For every $m \geq 1, n, r,s \geq 2, \: 0 \leq p_1, p_2, \ldots, p_{s-1} \leq m-1, \: i=2, \ldots, s$, such that $n=r^l$ for some $l \geq 1$, denote by $\Gamma(n,r;m;p_1,p_2,\ldots,p_{s-1})$ the following GBS graph of groups: it is a bouquet of circles $e_1,e_2, \ldots,e_s$, with $A(e_1)=1$, $\Omega(e_1)=n^m$, $A(e_i)=\Omega(e_i)=n^{p_{i-1}}$, $i=2, \ldots, s$. 

To each graph $\Gamma$ of this form we associate a vector $V(\Gamma)=(v_0,v_1,\ldots,v_{lm-1})$ in $\mathbb{Z}^{lm}$ as follows: $v_j=0$ if $l \nmid j$, and if $j=lj_0$, $0 \leq j_0 \leq m-1$, then $v_j$ is equal to the number of edges $e_i$ among $e_2, \ldots, e_s$ for which we have $p_{i-1}=j_0$. In other words, for every $0 \leq j \leq lm-1$ the number $v_j$ is equal to the number of edges $e_i$ among $e_2, \ldots, e_s$ with labels $A(e_i)=\Omega(e_i)=r^j$.

We say that two vectors $(v_0,v_1,\ldots,v_N)$ and $(w_0,w_1, \ldots, w_N)$ in $\mathbb Z ^ {N+1}$ are cyclic permutations of each other if there exists $C \in \{0,\ldots,N \} $ such that $v_{i'}=w_i$,  where $i'=i+C $ if $i+C \leq N$ and $i'=i+C-N$ otherwise.

Note that if $n_1$ and $n_2$ are not powers of the same number, then the GBS groups defined by the graphs $\Gamma(n_1,r_1;m_1;p_1,p_2,\ldots,p_{k_1-1})$ and
$\Gamma(n_2,r_2;m_2;q_1,q_2,\ldots,q_{k_2-1})$ cannot be isomorphic, since they have different images under the modular homomorphism, see Remark \ref{r4}; therefore, the interesting case is when $r_1=r_2$.
The following theorem tells us when two GBS groups of such form are isomorphic.
Recall that the isomorphism problem for the GBS groups in general is not known to be decidable yet.

\begin{theorem}\label{iso}
    Suppose $n_1=r^{l_1}$ and $n_2=r^{l_2}$, where $r \geq 2$, $l_1, l_2 \geq 1$. 
    Suppose $G_1$ is the GBS group defined by a graph $\Gamma_1=\Gamma(n_1,r;m_1;p_1,p_2,\ldots,p_{k_1-1})$, and 
    $G_2$ is the GBS group defined by a graph $\Gamma_2=\Gamma(n_2,r;m_2;q_1,q_2,\ldots,q_{k_2-1})$, for some $m_1,m_2 \geq 1, k_1,k_2 \geq 2, \; 0 \leq p_i \leq m_1-1, \: i=1, \ldots, k_1-1, \;  0 \leq q_i \leq m_2-1, \: i=1, \ldots, k_2-1$. \\ Then $G_1$ and $G_2$ are isomorphic if and only if the following three conditions hold: 
    \begin{enumerate}
        \item \label{isoit1} $n_1^{m_1}=n_2^{m_2}$. In other words, $l_1m_1=l_2m_2$.
        \item \label{isoit2} $k_1=k_2$.
        \item \label{isoit3} $V(\Gamma_1)$ is a cyclic permutation of $V(\Gamma_2)$.
    \end{enumerate}
\end{theorem}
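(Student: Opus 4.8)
The plan is to regard the three listed conditions as a complete set of isomorphism invariants and to establish them through the deformation--space machinery recalled in the Preliminaries. Since $\Gamma_1$ and $\Gamma_2$ are bouquets of circles they are reduced, so by \cite{F1} together with \cite{CF1,CF2} the groups $G_1$ and $G_2$ are isomorphic if and only if $\Gamma_1$ and $\Gamma_2$ lie in the same deformation space, i.e.\ are joined by a finite sequence of slide, induction and $\A$ moves; I would read off conditions \eqref{isoit1}--\eqref{isoit3} as quantities preserved by such moves, and conversely reconstruct a move sequence from them. The two coarse invariants come first. By Remark \ref{rem} only the ascending loop $e_1$ contributes to the modular homomorphism, so $M(G_i)=\langle r^{l_im_i}\rangle$; as the modular image is an isomorphism invariant (Remark \ref{r4}) we get $r^{l_1m_1}=r^{l_2m_2}$, which is \eqref{isoit1}. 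For \eqref{isoit2}, the quotient of a non-elementary GBS group by the subgroup generated by all its elliptic elements is the topological fundamental group of the defining graph, a free group whose rank is the first Betti number; for a bouquet of $s$ circles this rank equals $s$. Since the set of elliptic elements is an isomorphism invariant, so is this free quotient, giving $k_1=k_2$.

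\emph{Necessity of \eqref{isoit3}.} The vector $V(\Gamma)$ records precisely the multiset of labels of the balanced loops, written as powers of $r$. The only ambiguity in this multiset is a global cyclic shift, and its source is the ascending loop: the assignment $a\mapsto a^n$, $t_i\mapsto t_i$ extends to a surjective endomorphism carrying the group of $\Gamma(n,r;m;p_1,\dots,p_{k-1})$ onto that of $\Gamma(n,r;m;p_1',\dots,p_{k-1}')$ with $p_i'\equiv p_i+1 \pmod m$, i.e.\ it shifts $V$ cyclically by $l$. I would argue that any isomorphism must send the ascending loop --- canonical as the unique loop detected by $\Delta_G$ --- to the ascending loop, and must send balanced loops (distinguished intrinsically by lying in a $\mathbb{Z}\times\mathbb{Z}$ subgroup) to balanced loops, preserving their levels up to the one global shift coming from the choice of root of the base element. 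Equivalently, one checks that each elementary move either fixes $V$ or shifts it cyclically, so its cyclic--permutation class is preserved; this yields \eqref{isoit3}.

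\emph{Sufficiency.} Assume \eqref{isoit1}--\eqref{isoit3}. Reordering the loops is a graph isomorphism, so I may work with $V$ up to permutation of entries of equal index. Applying the shift endomorphism above $c$ times (or, geometrically, the corresponding $\A$ moves, as in Lemma \ref{slide} and Figure \ref{fig:1}) realises a cyclic shift of $V_1$ by $cl_1$; since $V_1$ and $V_2$ both have a nonzero entry in position $0$, the shift $C$ relating them is forced to be a common multiple of $l_1$ and $l_2$, and is therefore attained by iterating these unit shifts. After this normalisation $V_1=V_2$; together with $l_1m_1=l_2m_2$ this means that both graphs have the same ascending loop $(1,r^{l_1m_1})$ and the same multiset of balanced labels $\{r^{j}\}$, so the two labelled graphs coincide and $G_1\cong G_2$.

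The main obstacle is showing that the cyclic class of $V$ is a \emph{complete} invariant, which splits into two genuinely non-trivial tasks: on the necessity side, controlling all three types of moves on our special graphs and proving the canonicity of the ascending loop, so that no move, and hence no isomorphism, alters $V$ beyond a cyclic shift; and on the sufficiency side, verifying that the shift is actually realised by an isomorphism (equivalently, that the requisite $\A$ moves are available) and that the sparse vector in $\mathbb{Z}^{lm}$ interacts correctly with the relation $n=r^l$. The remaining steps are the routine bookkeeping already carried out in Lemmas \ref{collapse} and \ref{slide}.
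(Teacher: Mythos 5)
Your reductions of conditions \eqref{isoit1} and \eqref{isoit2} to the modular image and to the rank of the quotient by the elliptic elements match the paper. But you have only located, not closed, the crux: the necessity of \eqref{isoit3}. The one concrete strategy you offer for it --- ``one checks that each elementary move either fixes $V$ or shifts it cyclically'' --- does not make sense as stated, because an $\mathcal{A}^{\pm 1}$-move produces intermediate reduced graphs with more than one vertex; these are not of the form $\Gamma(n,r;m;\ldots)$, and $V$ is simply undefined on them. This is exactly the difficulty the paper flags (``since some $\mathcal{A}^{\pm 1}$-moves might be involved, the intermediate graphs might be not bouquets of circles, and so the proof is more complicated'') and resolves by a new construction: it shows that every vertex of every intermediate graph carries a strictly ascending loop (Lemma \ref{loops}), that after discarding one such loop per vertex all $A$-label ratios at a vertex are powers of $n_1$ (Lemma \ref{proportions}), and then builds a labelled dual graph $\overline{\Delta}$ whose path-labels modulo $m_1$ are invariant under all three moves (Lemma \ref{Delta}); condition \eqref{isoit3} is then read off from $\overline{\Delta}(\Gamma_2)$. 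Your alternative suggestion --- that the ``level'' of a balanced loop is intrinsic up to one global shift --- is the statement to be proved rather than an argument for it: the element $a$ is not canonical, and making ``level'' well defined across conjugation, root-taking and the moves is precisely the content of those lemmas.

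The sufficiency direction also contains a concrete error. The theorem does not assume that $V(\Gamma_1)$ and $V(\Gamma_2)$ have nonzero entries in position $0$ (the $p_i$ are arbitrary in $\{0,\dots,m_1-1\}$), and the shift $C$ relating the two vectors need not be a common multiple of $l_1$ and $l_2$: take $l_1=2$, $m_1=3$, $l_2=3$, $m_2=2$, $k_1=k_2=2$, $p_1=q_1=1$; then $V(\Gamma_1)=(0,0,1,0,0,0)$ and $V(\Gamma_2)=(0,0,0,1,0,0)$ are cyclic permutations of each other with shift $C\in\{1,5\}$, a multiple of neither $l_1$ nor $l_2$. Your shift endomorphism $a\mapsto a^{n_1}$ realises only shifts by multiples of $l_1$, so it cannot reach such a $C$, and your proof fails on this pair of (isomorphic) groups. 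The paper instead applies the induction move over $e_1$ (whose $\Omega$-label $r^{l_1m_1}$ is divisible by $r$) to multiply the labels of $e_2,\dots,e_k$ by $r$ one power at a time, realising a shift of $V$ by $1$ and hence any $C$, followed by slides over $e_1$. (Separately, a surjective endomorphism is not an isomorphism without a Hopficity argument; the induction-move formulation avoids this issue.)
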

\begin{proof}
    Recall that two reduced GBS graphs give rise to isomorphic GBS groups if an only if one can get from one to the other by successively applying the slide moves, induction moves and $\mathcal{A}^{\pm 1}$-moves, and so that all the intermediate graphs are reduced, see \cite{CF1}.
    
    We have that $\Gamma_1$ is a bouquet of circles $e_1, \ldots, e_{k_1}$, and $\Gamma_2$ is a bouquet of circles $f_1, \ldots, f_{k_2}$, with $A(e_1)=A(f_1)=1$, $\Omega(e_1)=n_1^{m_1}$, $\Omega(f_1)=n_2^{m_2}$, $A(e_i)=\Omega(e_i)=n^{p_{i-1}}$, $2 \leq i \leq k_1$, and $A(f_i)=\Omega(f_i)=n^{q_{i-1}}$, $2 \leq i \leq k_2$.

    Suppose first that conditions \eqref{isoit1}, \eqref{isoit2}, \eqref{isoit3} hold. Then $e_1$ and $f_1$ already have equal $A$-labels and equal $\Omega$-labels, and it remains to adjust the other edges. Let $k=k_1=k_2$, $S=l_1m_1=l_2m_2$. 
    Let $C$ be such that the cyclic permutation of $V(\Gamma_1)$ by $C$ gives $V(\Gamma_2)$. It's easy to see that it is equivalent to $l_1p_i+C = l_2q_{\sigma(i)} \mod S$, for some permutation $\sigma$ of $\{1, \ldots, k-1 \}$ and for all $i=1, \ldots, k-1$. Then apply the induction move to $\Gamma_1$ $C$ times, each time multiplying the labels of the edges $e_2, \ldots, e_k$ by $r$. We get from $\Gamma_1$ a new bouquet $\Gamma_1'$ of $k$ circles $e_1,e_2', \ldots, e_k'$, with $A(e_1)=1$, $\Omega(e_1)=r^S$, and $A(e_i')=\Omega(e_i')= r^{l_1p_{i-1}+C},$ for $i=2, \ldots, k$. Since $l_1p_{i-1}+C = l_2q_{\sigma(i-1)} \mod S$, we can apply slide moves to $e_2', \ldots, e_k'$ over $e_1$, if necessary, to obtain the graph $\Gamma_2$. This shows that the conditions \eqref{isoit1},\eqref{isoit2},\eqref{isoit3} are sufficient.

    Suppose now that $G_1$ and $G_2$ are isomorphic. It is immediate that the condition \eqref{isoit1} holds, since the images under the modular homomorphism of $G_1$ and $G_2$ must be the same, and $M(G_1)=\langle (n_1)^{m_1} \rangle _{\mathbb{Q}^*}$, $M(G_2)=\langle (n_2)^{m_2} \rangle _{\mathbb{Q}^*}$, by Remark \ref{rem} (since all the loops apart from $e_1$, $f_1$ do not contribute in $M(G_1)$, $M(G_2)$ respectively). Moreover, condition \eqref{isoit2} holds, since $k_i$ is the rank of the quotient of $G_i$ by the subgroup of all the elliptic elements, for $i=1,2$, see Lemma \ref{l3}, so $k_1=k_2$. Denote $k=k_1=k_2$. Therefore, it remains to prove \eqref{isoit3}. If $\Gamma_1$ and $\Gamma_2$ are related only by slide moves and induction moves, this is not hard to see, however since some $\mathcal{A}^{\pm 1}$-moves might be involved, the intermediate graphs might be not bouquets of circles, and so the proof is more complicated.

    Let $\Delta_1, \Delta_2, \ldots, \Delta_N$ be a sequence of reduced GBS graphs, such that $\Delta_1=\Gamma_1$, $\Delta_N=\Gamma_2$ and $\Delta_{i+1}$ is obtained from $\Delta_i$ by a slide move, induction move or $\mathcal{A}^{\pm 1}$-move $M_i$, for each $i=1, \ldots, N-1$. Let $\Delta=\Delta_i$ for some $i=1, \ldots, N$. Note that $\Delta$ defines the GBS group isomorphic to $G_1 \cong G_2$.
    
    \begin{lemma}\label{looplabel}
    For every loop $e$ in $\Delta$  we have $\Omega(e)/A(e)={n_1}^{m_1C}$ for some $C \in \mathbb{Z}$. In particular, if $A(e)=1$, then $\Omega(e)={n_1}^{m_1C}$ for some $C \geq 0$. 
    \end{lemma}
    \begin{proof}
        The claim follows immediately from  the fact that the image under modular homomorphism of $G_1$ consists of the powers of $n_1^{m_1}$, see Remark \ref{rem}.
    \end{proof}
    
   It also follows that we can always suppose that the labels of loops in $\Delta$ are positive (by multiplying both labels of a loop by $-1$ if necessary).

Abusing the notation, below we sometimes denote the vertex in $\Delta_i$ and its image under $M_i$ in $\Delta_{i+1}$ by the same letter, and similar for edges which remain unaffected by the move $M_i$, together with their labels.
    
\begin{lemma}\label{loops}
    In the above notation, at every vertex $v$ of $\Delta$ there exists a loop $e$ such that $A(e)=1$ and $\Omega(e)=n_1^{m_1C}$ for some $C > 0$. 
\end{lemma}
    \begin{proof}
   By Lemma \ref{looplabel}, it suffices to show that at every vertex $v$ of $\Delta$ there exists a strictly ascending loop $e$, i.e. a loop with one label equal to $1$ and the other  label greater than $1$. For $\Delta=\Gamma_1$ such a loop exists: $e=e_1$. Suppose by induction that such a loop exists in $\Delta_i$ and prove that it exists in $\Delta_{i+1}$, $i=1, \ldots, N-1$. Recall that $M_i$ is the move which transforms $\Delta_i$ into $\Delta_{i+1}$.
     
     Indeed, suppose first $M_i$ is a slide move of an edge $E$ over a loop $F$, which changes only the $\Omega$-label of $E$, with $\omega(E)=\alpha(F)=\omega(F)=v$. The strictly ascending loops at vertices not equal to $v$ are not affected. If $F$ or any other loop at $v$ not equal to $E$ is strictly ascending in $\Delta_i$, then its image is also strictly ascending in $\Delta_{i+1}$. So we can suppose that $E$ is a strictly ascending loop, and $F$ is not. If $A(E)=1$ then it stays equal to $1$ after applying $M_i$, and  $\Omega(E)$ stays bigger than $1$, since $F$ is not strictly ascending, therefore, the loop $E$ stays strictly ascending. If $\Omega(E)=1$, then one of the labels of $F$ has to be equal to $1$; since $F$ is not strictly ascending, this implies $A(F)=\Omega(F)=1$, so the slide move is the identity (i.e. does not change the graph nor the labels) and the result follows.
     
     Suppose now $M_i$ is a slide move of an edge $E$ over a non-loop edge $F$, with $\omega(E)=\alpha(F)=u$. It is immediate that every vertex except $u$ will keep the ascending loop, since all the edges incident to these vertices remain unchanged. Since the graph $\Delta_{i+1}$ has to be reduced, it follows that $E$ is not an ascending loop, so there has to be an ascending loop at $u$ which remains unchanged under $M_i$, so in this case the result also follows.
     
     Suppose $M_i$ is an induction move at a vertex $v$, with $E$ being a loop over which the induction is performed. Note that one of the labels of $E$ is equal to $1$, so either $A(E)=\Omega(E)=1$, in which case the induction move is the identity, or $E$ is a strictly ascending loop, which remains unchanged under $M_i$, and the result follows.
     
     Finally, suppose $M_i$ is an $\mathcal{A}^{\pm 1}$-move. Suppose first $M_i$ is an $\mathcal{A}^{-1}$-move, which collapses an edge $E$, such that the vertex $\alpha(E)=u$ has degree 3 and has a strictly ascending loop at it, to a loop at $\omega(E)$. In this case the ascending loops at all the vertices except $u$ are not affected, and the result follows. If $M_i$ is an inverse move, i.e. an $\mathcal{A}$-move, then the new vertex $u$ of degree 3 will have a strictly ascending loop by definition, and the strictly ascending loops at all the other vertices are not affected. This proves the lemma.
    \end{proof}  
        
   Note that although the labels of $\Delta$ don't have to be powers of $n_1$, in general, the proportions between the labels at any vertex always are, at least after we throw out one  strictly ascending loop at each vertex, as the following lemma shows.
        
\begin{lemma}\label{proportions}    
       In the above notation, for every vertex $v$ of $\Delta$, there exists a strictly ascending loop $f_v$ at $v$, such that for every two edges $e,e'$ of $\Delta$ which both begin in $v$ and none of which is equal to $f_v$ or $f_v^{-1}$, the proportion $A(e)/A(e')$ is a power of $n_1$. 
\end{lemma}   
Note that there exists a strictly ascending loop at each vertex of $\Delta$, by Lemma \ref{loops}. Moreover, if there is more than one strictly ascending loop at $v$, then, since the labels of such loops are powers of $n_1$ by Lemma \ref{looplabel}, we will get that in fact for every two edges $e,e'$ of $\Delta$ which both begin in $v$ $A(e)/A(e')=n_1^D$ for some $D \in \mathbb{Z}$. Therefore, the choice of the strictly ascending loop $f_v$ at a vertex $v$ is not essential. \begin{proof}   
  For $\Delta=\Gamma_1$ the claim obviously holds. Suppose by induction that the claim holds in $\Delta_i$ and prove that it holds in $\Delta_{i+1}$, $i=1, \ldots, N-1$, so $e, e' \in \Delta_{i+1}$. Recall that $M_i$ is the move which transforms $\Delta_i$ into $\Delta_{i+1}$. By the above remark, we can always suppose that the claim holds for every choice of strictly ascending loops at each vertex in $\Delta_i$, and it suffices to prove it for some choice of strictly ascending loops at each vertex in $\Delta_{i+1}$.
       
      Suppose first $M_i$ is a slide move of an edge $E$ over a loop $F$, which changes only the $A$-label of $E$, multiplying it by $\Omega(F)/A(F)$, with $\alpha(E)=\alpha(F)=\omega(F)=v$. Let $E'$ be the image of $E$, with $A(E')=A(E)\Omega(F)/A(F)$. Note that the move $M_i$ does not affect any vertices other than $v$, so for these vertices of $\Delta_{i+1}$ the claim holds, and we only need to check it holds at $v$.
      
      Suppose $F$ is a strictly ascending loop. Then we can choose $F$ as $f_v$ both in $\Delta_i$ and $\Delta_{i+1}$. We have $e,e' \neq F$ in $\Delta_{i+1}$. If $e,e' \neq E'$, then they also appear in $\Delta_i$ with the same proportion, so the claim holds. So we can assume that $e'=E'$. In this case we have $$A(e)/A(e')=A(e)/A(E')=A(e)/A(E) \cdot A(F)/\Omega(F),$$
      and $A(e)/A(E)$, $A(F)/\Omega(F)$ are powers of $n_1$, since $e, E \neq F$ and by Lemma \ref{looplabel}. Therefore, $A(e)/A(e')$ is a power of $n_1$, as desired.
      
      Suppose now that $F$ is not a strictly ascending loop. If $E$ is a strictly ascending loop, then $E'$ also has to be a strictly ascending loop, by the definition of a slide move, and we can choose $E$ as $f_v$ in $\Delta_i$ and $E'$ as $f_v$ in $\Delta_{i+1}$. Then we have $e,e' \neq E'$ in $\Delta_{i+1}$, and so they also appear in $\Delta_i$ with the same proportion, and the claim holds.
      Suppose now that both $E$ and $F$ are not strictly ascending loops. Let $f$ be some strictly ascending loop in $\Delta_i$ and $\Delta_{i+1}$, and choose $f_v$ to be $f$ in $\Delta_i$ and $\Delta_{i+1}$.
      Then we have $e,e' \neq f$. If $e, e' \neq E'$, then they also appear in $\Delta_i$ with the same proportion, and the claim holds. So suppose $e'=E'$. As above, we have $A(e)/A(e')=A(e)/A(E')=A(e)/A(E) \cdot A(F)/\Omega(F)$ and $A(e)/A(E)$, $A(F)/\Omega(F)$ are powers of $n_1$, since $e, E \neq f$ and by Lemma \ref{looplabel}. Therefore, $A(e)/A(e')$ is a power of $n_1$, as desired. 
      
    Suppose now that $M_i$ is a slide move of an edge $E$ over a non-loop edge $F$, which changes only the $A$-label of $E$, multiplying it by $\Omega(F)/A(F)$, where $\alpha(E)=\alpha(F)=u$ and $\omega(F)=v$.  Let $E'$ be the image of $E$, with $A(E')=A(E)\Omega(F)/A(F)$. Since the graphs $\Delta_i$, $\Delta_{i+1}$ are reduced, $E$ and $E'$ are not ascending loops. Choose $f_u$, $f_v$ to be some strictly ascending loops at $u$, $v$ respectively, both in $\Delta_i$ and $\Delta_{i+1}$, then $e,e' \neq f_v$.
        If $e, e' \neq E'$, then they also appear in $\Delta_i$ with the same proportion, and the claim holds.  Therefore, we can assume that $e'=E'$, and $e$ begins in $v$. We have 
        $$
        A(e')/A(e)=(A(E)\Omega(F))/(A(F)A(e)) = A(E) / A(F) \cdot A(F^{-1})/A(e).
        $$ 
        Since  $E, F \neq f_u$, and $e, F^{-1} \neq f_v$, we have by induction hypotheses that $A(E) / A(F)=n_1^{D_1}, \:  A(F^{-1})/A(e)=n_1^{D_2}$ for some $D_1, D_2 \in \mathbb{Z}$, and so $A(e')/A(e)=n_1^{D_1+D_2}$, as required.
        
       Suppose $M_i$ is an induction move at a vertex $v$, over a loop $E$. We can suppose that $E$ is strictly ascending, since otherwise the move is the identity. Choose $E$ as the designated loop $f_v$, both in $\Delta_i$ and $\Delta_{i+1}$. The induction move does not change the proportions of the $A$-labels of two edges with the same initial vertex which are both distinct from $E$ and $E^{-1}$, and so the step of induction follows immediately in this case.
       
         Finally, suppose $M_i$ is an $\A$-move. Suppose first $M_i$ is an $\mathcal{A}$-move, which collapses an edge $E$, such that the vertex $\alpha(E)=u$ has degree 3 and has a strictly ascending loop $F$ at it, to a loop $F'$ at $\omega(E)=w$. Note that $A(F')=\Omega(E)$, and $\Omega(F')/A(F')=n_1^C$ for some $C \in \mathbb{Z}$, by Lemma \ref{looplabel}. Choose any strictly ascending loop $f_v$ at $w$, same in $\Delta_i$ and in $\Delta_{i+1}$.
         If neither $e$ nor $e'$ is equal to $F'$ or $(F')^{-1}$, then the claim holds, since the labels are unchanged by $M_i$, so we can suppose that $e=F'$ or $e=(F')^{-1}$ and $e'$ begins in $w$. In the first case, we have $A(e)/A(e')=A(F')/A(e')=A(E^{-1})/A(e')$, and $e',E \neq f_v$, so $A(E^{-1})/A(e')$ is a power of $n_1$ and the claim holds. In the second case, we have $A(e)/A(e')=\Omega(F')/A(e')=n_1^C A(E^{-1})/A(e')$, so the claim also holds. 
         
         Now suppose $M_i$ is an $\mathcal{A}^{-1}$-move. Then the new degree 3 vertex has only one incident edge which is not a strictly ascending loop, so the claim is vacuous for it. For all the other vertices, it is immediate to see that the required proportions in $\Delta_{i+1}$ are all coming from the same proportions in $\Delta_i$, and we can leave the loops $f_v$ unchanged. This completes the proof of the lemma. \end{proof}   
      
        We are going to construct some invariant of the moves. In order to do it, to each $\Delta$ we associate an auxiliary ``dual'' edge-labelled graph $\D$ constructed as follows. Recall that by Lemma \ref{loops}, at each vertex of $\Delta$ there is a strictly ascending loop. Let $\Delta'$ be a graph obtained from $\Delta$ by deleting one strictly ascending loop $f_v$ at each vertex $v$; if there is more than one such loop at some vertex, then delete any, see Figure \ref{fig:2}. The graph $\Delta'$ is labelled by restriction of labelling from $\Delta$.
        
        Let $\D$ have one vertex $v(e)$ for each non-oriented edge $e$ of $\Delta'$. The edges of $\D$ are coloured in $V$ colours, where $V$ is the number of vertices in $\Delta$; the edges corresponding to a vertex $v$ of $\Delta$ (or equivalently $\Delta'$) we call $v$-edges. For each vertex $v$ of $\Delta'$ let $d(v)$ be the set of edges in $\Delta'$ beginning in $v$, both positive and negative, except loops: each loop at $v$ counts as only one edge in $d(v)$ (we choose an arbitrary orientation of it). Then the set of $v$-edges of $\D$ by definition forms a complete graph on $|d(v)|$ vertices, namely, each pair of vertices in $\D$ corresponding to distinct edges in $d(v)$ is connected by a $v$-edge in $\D$, and this applies to all vertices $v$ of $\Delta'$. (If $v$ has degree 3 in $\Delta$, then it has degree 1 in $\Delta'$ and so there are no $v$-edges in $\D$.) This defines $\D$ as a (coloured) graph; note that it can have multiple edges (at most double and of different colours), but has no loops, see Figure \ref{fig:2}.

        Now we define an edge labelling on the graph $\D$ as follows. We now think of $\D$ as a directed graph, choosing an arbitrary orientation of edges. By Lemma \ref{proportions}, if $e$ and $e'$ are two distinct edges in $d(v)$, then $A(e)/A(e')=n_1^C$, for some $C \in \mathbb{Z}$; let $0 \leq C_0 \leq m_1-1$, $C_0 = C \mod m_1$. In this case we label the oriented $v$-edge of $\D$ going from $v(e')$ to $v(e)$ by $C_0$. We do this for both positive and negative edges of $\D$.

 For an edge $E$ of $\D$ we denote its label by $L(E)$. It's easy to see that  we have $L(E^{-1})=-L(E) \mod m_1$. 
        Note that the labelled graph $\D$ does not depend on the choice of the orientation of loops in $d(v)$, by Lemma \ref{looplabel}. Note also that the labelled graph $\D$ doesn't depend on the choice of the strictly ascending loops in the construction of $\Delta'$, since by Lemma \ref{looplabel} all the labels of strictly ascending loops in $\Delta$ are powers of $n_1^{m_1}$, and so if $e'$, $e''$ are strictly ascending loops and $A(e)/A(e')=n_1^C$, $A(e)/A(e'')=n_1^{C'}$, then $C = C' \mod m_1$. Since $\Delta$ is connected, $\D$ is also connected.
        
        If $P=E_1E_2\ldots E_s$ is a path in $\D$, we let $L(P) = L(E_1)+L(E_2)+ \ldots +L(E_s) \mod m_1$, $0 \leq L(P) \leq m_1-1$, be the label of the path $P$. We let the empty path have label $0$.
        Below all the labels of edges and paths are considered modulo $m_1$, and we omit this often.

        The main property of the graph $\D$ is described in the following lemma, see the example in Figure \ref{fig:2}. Recall that $k$ is the number of loops in $\Gamma_1$ and in $\Gamma_2$, and the $p_i$, $q_i$ are as in the statement of the theorem.

    \begin{lemma}\label{Delta}
           The number of vertices in $\D$ is equal to $k-1$, and there is an enumeration $v_1, v_2, \ldots, v_{k-1}$ of the vertex set of $\D$ such that for every $1 \leq i,j \leq k-1$ and every path $P_{ij}$ from $v_i$ to $v_j$ we have $L(P_{ij})=p_j-p_i$.
    \end{lemma}
    \begin{proof}
        First note that the claim holds for $\Delta=\Gamma_1$.
        Indeed, in this case $\Delta'$ is a bouquet of $k-1$ circles, with labels $n^{p_i}$, $i=1, \ldots, k-1$. Therefore, $\D$ has $k-1$ vertices, there is only one colour and $\D$ is a complete graph on $k-1$ vertices $v_1, \ldots, v_{k-1}$, with edges $e_{ij}$ from $v_i$ to $v_j$ having labels $L(e_{ij})=p_j-p_i$, $1 \leq i \neq j \leq k-1$, and the claim follows immediately.
        Suppose by induction that the claim holds in $\Delta_s$ and prove that it holds in $\Delta_{s+1}$, $s=1, \ldots, N-1$. Recall that $M_s$ is the move which transforms $\Delta_s$ into $\Delta_{s+1}$.
        
       Note that the number of edges in $\Delta'$ is equal to the number of edges in $\Delta$ minus the number of vertices in $\Delta$, which is equal to the rank of (topological) fundamental group of $\Delta$ minus 1, i.e., $k-1$. Alternatively, it is easy to see that $M_i$ does not change the number of edges in $\Delta'$. Therefore, $\D$ has $k-1$ vertices.
       
      Let $v_1, v_2, \ldots, v_{k-1}$ be the enumeration of the vertex set of $\D_s$ such that for every $1 \leq i,j \leq k-1$ and every path $P_{ij}$ from $v_i$ to $v_j$ we have $L(P_{ij})=p_j-p_i$. We want to construct a similar enumeration for $\D_{s+1}$.
      
      Consider the following two (abstract) operations on the labelled graph $\D_s$, which don't change the vertex set of $\D_s$: adding an edge $e_1$ from $v_{i_1}$ to $v_{j_1}$ such that $L(e_1)=p_{j_1}-p_{i_1}$ (and $L(e_1^{-1})=p_{i_1}-p_{j_1}$) for some $1 \leq i_1, j_1 \leq k-1$; deleting an edge $e_2$ from $v_{i_2}$ to $v_{j_2}$ such that the graph remains connected after deleting $e_2$, for some $1 \leq i_2, j_2 \leq k-1$. It is immediate to see that the graph obtained by these operations from $\D_s$ still satisfies the condition that for every $1 \leq i,j \leq k-1$ and every path $P_{ij}$ from $v_i$ to $v_j$ in the new graph we have $L(P_{ij})=p_j-p_i$. Therefore, it suffices to see that $\D_{s+1}$ can be obtained from $\D_s$ by applying a finite number of the above two operations, which we call admissible operations.

      It is easy to see that if $M_s$ is an induction move or an $\A$-move, then $\D_{i+1}=\D_i$ as labelled graphs. Therefore, the induction step holds in this case. 
      
      Suppose now that $M_s$ is a slide move of an edge $E$ over a loop  $F$, which changes only the $A$-label of $E$, multiplying it by $\Omega(F)/A(F)$, where $\alpha(E)=\alpha(F)=\omega(F)=v$. Let $E'$ be the image of $E$. As in the proof of Lemma \ref{proportions}, we can choose the distinguished loop $f_v$ at $v$ to be either $E$ in $\Delta_s$ and $E'$ in $\Delta_{s+1}$, or the same in both graphs. Since $\Omega(F)/A(F)$ is a power of $n_1^{m_1}$ by Lemma \ref{looplabel}, the  proportions defining labels of $\D$ don't change modulo $m_1$ when applying $M_s$. It follows that again $\D_{s+1}=\D_s$ as labelled graphs,  so the induction step holds in this case.
     
      The only remaining case to consider is when $M_s$ is a slide move of an edge $E$ over a non-loop edge $F$, which changes only the $A$-label of $E$, multiplying it by $\Omega(F)/A(F)$, where $\alpha(E)=\alpha(F)=v$ and $\omega(F)=w$.  Let $E'$ be the image of $E$, with $A(E')=A(E)\Omega(F)/A(F)$. Since the graphs $\Delta_s$, $\Delta_{s+1}$ are reduced, $E$ and $E'$ are not ascending loops. Choose $f_v$, $f_w$ to be some strictly ascending loops at $v$, $w$ respectively, both in $\Delta_s$ and $\Delta_{s+1}$. 
      
     Recall that for an edge $z$ of $\Delta'$ we denote by $v(z')$ the corresponding vertex of $\D$.
     Let $E_1, \ldots, E_{t}$ be all the edges of $\Delta_s'$ starting in $v$ which are not equal to $E$, $E^{-1}$ and $F$, and $E'_1, \ldots, E'_{t'}$ be all the edges of $\Delta_{s+1}'$ starting in $w$ which are not equal to $E'$, $E'^{-1}$ and $F^{-1}$, where $t, t' \geq 0$.
     
     The set of vertices of $\D_{s+1}$ is the same as the set of vertices of $\D_s$, except that the vertex $v(E)$ is replaced by the vertex $v(E')$. 
     Note that the only edges of $\D_{s+1}$ which can be different from those in $\D_s$ are the $v$-edges and $w$-edges.  
 
     Suppose first that the edge $E$ is not a loop, and $\omega(E) \neq \omega(F)$, so that $E'$ is also not a loop.
    Then the $v$-edges of $\D_s$ form a complete subgraph on $v(E), v(F), v(E_1), \ldots, v(E_t)$, the $w$-edges of $\D_s$ form a complete subgraph on $v(F), v(E'_1), \ldots, v(E'_{t'})$, the $v$-edges of $\D_{s+1}$ form a complete subgraph on $v(F), v(E_1), \ldots, v(E_t)$, and the  $w$-edges of $\D_{s+1}$ form a complete subgraph on $v(E'), v(F), v(E'_1), \ldots, v(E'_{t'})$. 
     It follows that in this case the graph $\D_{s+1}$ can be obtained from $\D_s$ by renaming $v(E)$ to $v(E')$,  adding the $w$-edges connecting $v(E')$ to  $v(F), v(E'_1), \ldots, v(E'_{t'})$ (with the labels as in $\D_{s+1}$), and then deleting the $v$-edges connecting $v(E')$ to $v(F), v(E_1), \ldots, v(E_t)$ (which come from the edges connecting $v(E)$ to $v(F), v(E_1), \ldots, v(E_t)$ in $\D_s$).
     
     Consider first the $w$-edges which are added to $\D_s$. 
     By definition of sliding and of the labels, the label of the new $w$-edge connecting $v(E')$ to  $v(F)$ (in $\D_{s+1}$) is equal to the label of the $v$-edge of $\D_s$ connecting $v(E)$ to $v(F)$. Moreover, the label of the new $w$-edge connecting $v(E')$ to  $v(E_i')$ (in $\D_{s+1}$) is equal to the sum of the label of the $v$-edge of $\D_s$ connecting $v(E)$ to $v(F)$, and the label of the $w$-edge of $\D_s$ connecting $v(F)$ to $v(E_i')$, for each $i=1,\ldots,t'$. It follows that the operations of adding all these edges are admissible operations.
     
     Now consider the $v$-edges which are deleted from the obtained labelled graph. It is immediate that all the graphs obtained in the process of deleting these edges are connected, since $\D_{s+1}$ is connected, therefore, these are also admissible operations. Thus, in this case $\D_{s+1}$ is obtained from $\D_s$ by applying the admissible operations, as desired.
     
     The other cases are similar. Indeed, suppose that $E$ is a loop, then $E'$ is not a loop. Then the $v$-edges of $\D_s$ form a complete subgraph on $v(E), v(F), v(E_1), \ldots, v(E_t)$, the $w$-edges of $\D_s$ form a complete subgraph on $v(F), v(E'_1), \ldots, v(E'_{t'})$, the $v$-edges of $\D_{s+1}$ form a complete subgraph on $v(E), v(F), v(E_1), \ldots, v(E_t)$, and the  $w$-edges of $\D_{s+1}$ form a complete subgraph on $v(E'), v(F), v(E'_1), \ldots, v(E'_{t'})$.
     It follows that in this case the graph $\D_{s+1}$ can be obtained from $\D_s$ by renaming $v(E)$ to $v(E')$ and adding the $w$-edges connecting $v(E')$ to  $v(F), v(E'_1), \ldots, v(E'_{t'})$ (with labels as in $\D_{s+1}$). As before,  we get that $\D_{s+1}$ is obtained from $\D_s$ by applying the admissible operations, as desired.
     
    Finally, suppose that $\omega(E)=\omega(F)$, so $E'$ is a loop. Then the $v$-edges of $\D_s$ form a complete subgraph on $v(E), v(F), v(E_1), \ldots, v(E_t)$, the $w$-edges of $\D_s$ form a complete subgraph on $v(E), v(F), v(E'_1), \ldots, v(E'_{t'})$, the $v$-edges of $\D_{s+1}$ form a complete subgraph on $v(F), v(E_1), \ldots, v(E_t)$, and the  $w$-edges of $\D_{s+1}$ form a complete subgraph on $v(E'), v(F), v(E'_1), \ldots, v(E'_{t'})$.
     It follows that in this case the graph $\D_{s+1}$ can be obtained from $\D_s$ by deleting the $v$-edges connecting $v(E)$ to $v(F), v(E_1), \ldots, v(E_t)$, and renaming $v(E)$ to $v(E')$. As before, we get that $\D_{s+1}$ is obtained from $\D_s$ by applying the admissible operations, as desired. This proves the lemma. 
      \end{proof}

       \begin{center}
\begin{figure}[ht]
\begin{tikzpicture}
    \node (x) at (0,0) [point];
    \node (y) at (1,0) [point];
    \node (z) at (2,0) [point];
    
    \draw[->] (x) -- (y);
    \draw[->] (z) -- (y);
    \path (y) edge[scale=3,out=120,in=60, loop] (y);
    \path[->] (z) edge[bend left=60] (y);

    \node (x) at (0,3) [point];
    \node (y) at (1,3) [point];
    \node (z) at (2,3) [point];
    
    \draw[->] (x) -- (y);
    \draw[->] (z) -- (y);
    \path (y) edge[scale=3,out=120,in=60, loop] (y);
    \path (z) edge[scale=3,out=80,in=20, loop] (z);
    
    \node (x) at (0,6) [point];
    \node (y) at (1,6) [point];
    \node (z) at (2,6) [point];
    
    \draw[->] (x) -- (y);
    \draw[->] (z) -- (y);
    \path (x) edge[scale=3,out=160,in=100, loop] (x);
    \path (y) edge[scale=3,out=170,in=110, loop] (y);
    \path (z) edge[scale=3,out=80,in=20, loop] (z);
    \path (y) edge[scale=3,out=80,in=20, loop] (y);
    \path (z) edge[scale=3,out=-20,in=-80, loop] (z);

    \node (x) at (6,0) [point];
    \node (y) at (7,1) [point];
    \node (z) at (8,0) [point];
    \node (t) at (7,-1) [point];
    
    \draw[->] (x) -- (y);
    \draw[->] (y) -- (z);
    \draw[->] (z) -- (t);
    \draw[->] (t) -- (x);
    \draw[->] (x) -- (z);
    \draw[->] (y) -- (t);
    \path[->,dashed] (z) edge[bend left=50] (t);

    \node (x) at (6,3) [point];
    \node (y) at (7,4) [point];
    \node (z) at (8,3) [point];
    \node (t) at (7,2) [point];
    
    \draw[->] (x) -- (y);
    \draw[->] (y) -- (z);
    \draw[->, dashed] (z) -- (t);
    \draw[->] (x) -- (z);
    
    \node (x) at (7,6) [point];
  
  \path (x) edge[scale=3,out=360,in=300, loop] (x);
  \path (x) edge[scale=3,out=290,in=230, loop] (x);
  \path (x) edge[scale=3,out=220,in=160, loop] (x);
  \path (x) edge[scale=3,out=150,in=90, loop] (x);
  \path (x) edge[scale=3,out=80,in=20, loop] (x);
  
  \node (d1) at (-1.5,6) {$\Delta_1$};
  \node (d1') at (-1.5,3) {$\Delta_1'$};
  \node (d2') at (-1.5,0) {$\Delta_2'$};
  \node (g1) at (9.5,6) {$\Gamma_1$};
  \node (d1b) at (9.5,3) {$\overline{\Delta}_1$};
  \node (d2b) at (9.5,0) {$\overline{\Delta}_2$};
 
\node(slid) at (4.12,6){\LARGE{$\simeq$}};

   \draw[<->,line width=1.5] (3.25,3) -- (5,3);
   \draw[<->,line width=1.5] (3.25,0) -- (5,0);
   \draw[->,line width=1.5] (1,2.5) -- (1,1.5);
   \node(slid) at (1.75,2){\footnotesize sliding};

 \node (vv) at (0.1,-0.2) {\footnotesize $p$};
 \node (vv) at (0.5,-0.2) {\footnotesize $e_1$};
 \node (vv) at (0.8,-0.2) {\footnotesize $p$};
 
 \node (vv) at (1.4,0.2) {\footnotesize $p^2$};
 \node (vv) at (1.7,0.1) {\footnotesize $e_3$};
 \node (vv) at (2,0.2) {\footnotesize $p^3$};
 
 \node (vv) at (1.2,-0.5) {\footnotesize $p^2$};
 \node (vv) at (1.5,-0.4) {\footnotesize $e_4$};
 \node (vv) at (1.9,-0.3) {\footnotesize $p^3$};
 
 \node (vv) at (0.6,0.6) {\footnotesize $1$};
 \node (vv) at (1.4,0.6) {\footnotesize $p^3$};
 \node (vv) at (1,1.2) {\footnotesize $e_2$};

 \node (vv) at (0.1,2.8) {\footnotesize $p$};
 \node (vv) at (0.5,2.8) {\footnotesize $e_1$};
 \node (vv) at (0.8,2.8) {\footnotesize $p$};
 
 \node (vv) at (1.2,2.8) {\footnotesize $p^2$};
 \node (vv) at (1.5,2.8) {\footnotesize $e_3$};
 \node (vv) at (1.8,3.2) {\footnotesize $p^3$};
 
 \node (vv) at (0.6,3.6) {\footnotesize $1$};
 \node (vv) at (1.4,3.6) {\footnotesize $p^3$};
 \node (vv) at (1,4.2) {\footnotesize $e_2$};
 
 \node (vv) at (2.1,3.7) {\footnotesize $p^3$};
 \node (vv) at (2.7,4) {\footnotesize $e_4$};
 \node (vv) at (2.7,3.2) {\footnotesize $p^3$};
 
 \node (vv) at (0.1,5.8) {\footnotesize $p$};
 \node (vv) at (0.8,5.8) {\footnotesize $p$};
 
 \node (vv) at (1.2,5.8) {\footnotesize $p^2$};
 \node (vv) at (1.8,6.2) {\footnotesize $p^3$};
 
 \node (vv) at (1.1,6.6) {\footnotesize $1$};
 \node (vv) at (1.5,6.4) {\footnotesize $p^3$};

 \node (vv) at (0.2,6.2) {\footnotesize $1$};
 \node (vv) at (0.9,6.7) {\footnotesize $p^3$};
 
 \node (vv) at (2.1,6.7) {\footnotesize $p^3$};
 \node (vv) at (2.7,6.2) {\footnotesize $p^3$};
 
 \node (vv) at (-0.7,6.3) {\footnotesize $1$};
 \node (vv) at (-0.1,6.8) {\footnotesize $p^3$};
 
 \node (vv) at (1.9,5.4) {\footnotesize $p^3$};
 \node (vv) at (2.7,5.7) {\footnotesize $1$};
 
 \node (vv) at (-0.8,6.9) {\footnotesize $l_1$};
 \node (vv) at (0.4,6.9) {\footnotesize $l_2$};
 \node (vv) at (1.8,6.9) {\footnotesize $l_5$};
 \node (vv) at (2.8,6.9) {\footnotesize $l_4$};
 \node (vv) at (2.7,5) {\footnotesize $l_3$};


 \node (vv) at (6.7,0.2) {\footnotesize $1$};
 
 \node (vv) at (6.4,0.6) {\footnotesize $2$};
 \node (vv) at (6.4,-0.6) {\footnotesize $2$};
 \node (vv) at (7.1,0.4) {\footnotesize $2$};
 \node (vv) at (7.6,0.6) {\footnotesize $2$};
 \node (vv) at (7.6,-0.6) {\footnotesize $0$};
 \node (vv) at (8.1,-0.6) {\footnotesize $0$};

\node (vv) at (6,-0.3) {\footnotesize $v_1$};
\node (vv) at (7.3,1) {\footnotesize $v_2$};
\node (vv) at (8.3,0) {\footnotesize $v_3$};
\node (vv) at (6.7,-1) {\footnotesize $v_4$};

\node (vv) at (7,3.2) {\footnotesize $1$};
\node (vv) at (6.4,3.6) {\footnotesize $2$};
\node (vv) at (7.6,3.6) {\footnotesize $2$};
\node (vv) at (7.6,2.4) {\footnotesize $0$};

\node (vv) at (6.2,2.7) {\footnotesize $v_1=v(e_1)$};
\node (vv) at (7.8,4) {\footnotesize $v_2=v(e_2)$};
\node (vv) at (8.5,2.7) {\footnotesize $v_3=v(e_3)$};
\node (vv) at (7.8,2) {\footnotesize $v_4=v(e_4)$};

\node (vv) at (7,6.9) {\footnotesize $p$};
\node (vv) at (7.2,6.8) {\footnotesize $1$};
\node (vv) at (7.8,6.3) {\footnotesize $1$};
\node (vv) at (7.9,6) {\footnotesize $p^2$};
\node (vv) at (7.5,5.2) {\footnotesize $p^2$};
\node (vv) at (7.3,5.3) {\footnotesize $p^2$};
\node (vv) at (6.5,5.3) {\footnotesize $p^2$};
\node (vv) at (6.4,5.5) {\footnotesize $p^3$};
\node (vv) at (6.1,6.2) {\footnotesize $1$};
\node (vv) at (6.3,6.5) {\footnotesize $p$};

\node (vv) at (6.4,7.1) {\footnotesize $l_1$};
\node (vv) at (7.8,6.9) {\footnotesize $l_2$};
\node (vv) at (8.1,5.4) {\footnotesize $l_3$};
\node (vv) at (6.9,4.8) {\footnotesize $l_4$};
\node (vv) at (5.8,5.8) {\footnotesize $l_5$};

\end{tikzpicture}
\caption{Example of $\Delta_1$,$\Delta_1'$ and $\overline{ \Delta}_1$ and Lemma \ref{Delta}.  The labels of negative edges of $\D$ are omitted.} \label{fig:2}
\end{figure}
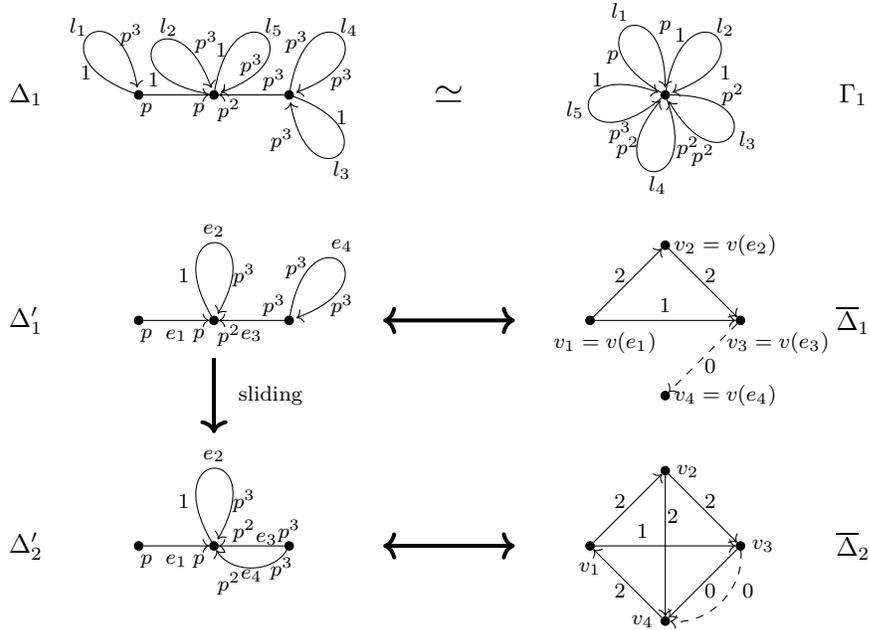
\end{center}

     We now apply Lemma \ref{Delta} to $\Delta=\Delta_N=\Gamma_2$. Since $\Gamma_2$ is a bouquet of circles $f_1, f_2, \ldots, f_k$, with $A(f_1)=1$ and $A(f_2)=n_2^{m_2}$, the graph $\D$ in this case is a complete graph on $k-1$ vertices. We obtain from Lemma \ref{Delta} that there is $\sigma \in S_{k-1}$ such that $A(f_{\sigma(j)+1})/A(f_{\sigma(i)+1})={n_1}^{p_j-p_i+\lambda m_1}=r^{l_1(p_j-p_i+\lambda m_1)}$ for all $1 \leq i < j \leq k-1$ and for some $\lambda \in \mathbb{Z}$. But we also have $A(f_{\sigma(i)+1})={n_2}^{q_{\sigma(i)}}=r^{l_2q_{\sigma(i)}}$, $A(f_{\sigma(j)+1})={n_2}^{q_{\sigma(j)}}=r^{l_2q_{\sigma(j)}}$, so $r^{l_2q_{\sigma(j)}-l_2q_{\sigma(i)}}=r^{l_1p_j-l_1p_i+\lambda l_1 m_1}$, therefore $l_2q_{\sigma(j)}-l_2q_{\sigma(i)}=l_1p_j-l_1p_i \mod S$, where $S=l_1m_1=l_2m_2$. Taking $i=1$, we get $l_2q_{\sigma(j)}-l_2q_{\sigma(1)}=l_1p_j-l_1p_1 \mod S$, so $l_2q_{\sigma(j)}=l_1p_j+C \mod S$  for all $j=1, \ldots, k-1$, where $C=l_2q_{\sigma(1)}-l_1p_1$, i.e., $V(\Gamma_1)$ and $V(\Gamma_2)$ are cyclic permutations of each other, as required. This finishes the proof of Theorem \ref{iso}.
\end{proof}

We can now prove Theorem \ref{isothm}.

{\it Proof of Theorem \ref{isothm}.}
 Given finite generating sets of the subgroups $H_1$, $H_2$, we can algorithmically compute their GBS graphs (corresponding to their induced splittings), by \cite{KWM}. Note that by Remark \ref{rem} if $q_1$ and $q_2$ are not powers of the same number, then the groups $G_{1,q_1}^{d_1}$ and $G_{1,q_2}^{d_2}$ are not commensurable, and so $H_1$ and $H_2$ are not isomorphic. Therefore, we can suppose that $q_1$ and $q_2$ are powers of the same number. It is easy to see that the proofs of Lemma \ref{collapse} and Lemma \ref{slide} are algorithmic, and so for subgroups $H_1$ and $H_2$ we can compute their GBS graphs $\Gamma_1$ and $\Gamma_2$ in the normal form, i.e. as in the statement of Theorem \ref{iso}. Now it follows from Theorem \ref{iso} that we can decide whether $H_1$ and $H_2$ are isomorphic. This proves Theorem \ref{isothm}.

\section{Commensurability of some (G)BS groups}

The next two lemmas are probably well-known and describe the only ways non-solvable Baumslag-Solitar groups can be commensurable.

\begin{lemma}\label{l4}
   Let $n \geq m \geq 1$. Then the groups $BS(m,n)$ and $BS(-m,n)$ are commensurable.
\end{lemma}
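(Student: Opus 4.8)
The plan is to produce, in each group, an explicit subgroup of index $2$ and show the two subgroups are isomorphic; commensurability then follows immediately. Write $BS(m,n)=\langle a,t\mid t^{-1}a^mt=a^n\rangle$ and $BS(-m,n)=\langle \alpha,\tau\mid \tau^{-1}\alpha^m\tau=\alpha^{-n}\rangle$. In both groups the \emph{parity} homomorphism onto $\mathbb{Z}/2\mathbb{Z}$ sending the stable letter ($t$, resp. $\tau$) to $1$ and the elliptic generator ($a$, resp. $\alpha$) to $0$ is well defined, because the relator has total $t$-exponent (resp. $\tau$-exponent) zero; it is clearly surjective, so its kernels $H\le BS(m,n)$ and $H'\le BS(-m,n)$ have index $2$.

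The main computation is to write down presentations of $H$ and $H'$. Using the Schreier transversal $\{1,t\}$ and Reidemeister--Schreier rewriting, with the new generators $b=tat^{-1}$ and $s=t^2$ (and noting $t^{-1}at=s^{-1}bs$), I would obtain
\[
H=\langle a,b,s \mid a^m=b^n,\ s^{-1}b^ms=a^n\rangle,
\]
and, performing the identical computation in $BS(-m,n)$ with $\beta=\tau\alpha\tau^{-1}$, $\sigma=\tau^2$,
\[
H'=\langle \alpha,\beta,\sigma \mid \alpha^m=\beta^{-n},\ \sigma^{-1}\beta^m\sigma=\alpha^{-n}\rangle.
\]
One then checks that the assignment $a\mapsto\alpha$, $b\mapsto\beta^{-1}$, $s\mapsto\sigma$ carries the first relator of $H$ to $\alpha^m=\beta^{-n}$ and the second to $\sigma^{-1}\beta^{-m}\sigma=\alpha^n$ (equivalently $\sigma^{-1}\beta^m\sigma=\alpha^{-n}$), hence defines a homomorphism $H\to H'$; since it is induced by a bijection of generators that admits the obvious inverse, it is an isomorphism. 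Therefore $H\cong H'$, both of index $2$, so $BS(m,n)\sim BS(-m,n)$.

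Conceptually this is most transparent in the graph-of-groups language already set up in the paper: $H$ and $H'$ are the fundamental groups of the connected double covers of the single-loop GBS graphs defining the two groups (these covers are $2$-cycles), and the inversion $b\mapsto\beta^{-1}$ above is exactly the combined effect of the permitted sign-change moves, namely a vertex sign change at one of the two vertices followed by an edge sign change on one edge, which transforms the $BS(-m,n)$ cover (both edges labelled $A=-m,\ \Omega=n$) into the $BS(m,n)$ cover (both edges labelled $A=m,\ \Omega=n$). I expect the only delicate point to be bookkeeping: getting the orientation and sign conventions for $A$ and $\Omega$ consistent on the double cover, and spotting that the correct correspondence uses $\beta^{-1}$ rather than $\beta$; once the presentations are in hand the verification of the isomorphism is routine, and the degenerate boundary cases ($m=1$ or $m=n$) are covered by the same argument without modification.
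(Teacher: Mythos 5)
Your proposal is correct and follows essentially the same route as the paper: both take the kernel of the homomorphism onto $\mathbb{Z}/2\mathbb{Z}$ killing $a$ and sending $t$ to the generator, and identify the two resulting index-$2$ subgroups. The paper phrases the identification as sign-change moves on the length-$2$ cycle of GBS graphs, whereas your Reidemeister--Schreier computation produces the explicit presentations and the generator inversion $b\mapsto\beta^{-1}$ realising exactly those moves, which merely makes the paper's ``it is easy to see'' step explicit.
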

\begin{proof}
   Let $G_1=BS(m,n)= \langle a, t \mid t^{-1}a^mt=a^n \rangle$ and $\varphi: G \rightarrow \langle z \rangle _2, \: \psi(a)=1, \: \psi(t)=z$. Then $Ker (\psi)=H_1$ is an index 2 subgroup in $G$, and it is easy to see that $H_1$ is isomorphic to the GBS group given by a simple cycle of length 2, $e_1e_2$, with $A(e_1)=A(e_2)=m, \: \Omega(e_1)=\Omega(e_2)=n$. In the same way, we define $H_2$ of index 2 in $G_2=BS(-m,n)$ isomorphic to the GBS group given by a simple cycle of length 2, $f_1f_2$, with $A(f_1)=A(f_2)=-m, \: \Omega(f_1)=\Omega(f_2)=n$. Now these two GBS groups are isomorphic, which can be seen by changing the signs first of $A(f_1)$ and $\Omega(f_1)$, and then of $\Omega(f_1)$ and $A(f_2)$.
\end{proof}

\begin{lemma}\label{l5}
   Let $n \geq 1$, $k>1$, $l>1$. Then the groups $BS(k,kn)$ and $BS(l,ln)$ are commensurable.
\end{lemma}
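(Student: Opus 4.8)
The plan is to reduce the whole statement to a commensurability between the model groups $G^d_{1,n}$ and then to locate a common finite-index subgroup. First I would apply Lemma \ref{l1}: since $\gcd(k,kn)=k$ with $k=k\cdot 1$ and $kn=k\cdot n$, the subgroup $H_{k,kn}$ has index $k$ in $BS(k,kn)$ and is isomorphic to $G^k_{1,n}$, and likewise $H_{l,ln}\cong G^l_{1,n}$ has index $l$ in $BS(l,ln)$. This reduces the lemma to proving that $G^k_{1,n}$ and $G^l_{1,n}$ are commensurable. For the boundary case $n=1$ one has $G^d_{1,1}=\mathbb Z\times F_d$, and the same kernel construction places $\mathbb Z\times F_k$ with finite index in $BS(k,k)$, so that case follows from the commensurability of free groups of rank $\geq 2$; I would dispose of it separately and run the main argument for $n\geq 2$.

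The heart of the proof is the following claim, which I would isolate: for every $d\geq 2$ and every $N\geq 1$, the group $G^d_{1,n}$ contains a subgroup of index $N$ isomorphic to $G^{N(d-1)+1}_{1,n}$. Granting this, I would take the subgroup of index $l-1$ in $G^k_{1,n}$ and the subgroup of index $k-1$ in $G^l_{1,n}$; both are then isomorphic to $G^{(k-1)(l-1)+1}_{1,n}$, which exhibits the desired common finite-index subgroup and finishes the proof.

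To establish the claim I would produce an explicit connected $N$-fold covering of the bouquet of $d$ circles defining $G^d_{1,n}$ and invoke Lemma \ref{l3} to identify the associated index-$N$ subgroup $K$ with the GBS group on the covering graph $\Gamma_K$, all of whose positive edges carry labels $A=1$, $\Omega=n$. The key design choice is to let one of the $d$ loops lift to $N$ genuine loops (its permutation in $S_N$ is the identity) while a second loop lifts to a single $N$-cycle, which guarantees connectedness; the remaining loops may be sent to the identity. Each of the $N$ lifted loops then contributes a generator $z_i$ with $|p(z_i)-q(z_i)|=1$, so the integer $m$ of \eqref{m} equals $1$. The point is that the case $m=1$ of Lemma \ref{slide} is degenerate: the constraint $0\leq p_i\leq m-1=0$ forces every petal label to be trivial, so the normal form collapses to a single ascending loop $(1,n)$ together with $k-1$ trivial loops, where $k=N(d-1)+1$ is the rank of the topological fundamental group of $\Gamma_K$ (computed by the Nielsen--Schreier edge count). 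This is exactly the reduced form of $G^{k}_{1,n}$, whence $K\cong G^{N(d-1)+1}_{1,n}$, either directly or via Theorem \ref{iso} (both sides share the same invariants $n^m=n$, the same number of loops, and the same vector).

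The main obstacle I anticipate is purely in the verification that the chosen covering really forces $m=1$ and that the resulting $m=1$ normal form coincides with that of $G^{N(d-1)+1}_{1,n}$; once $m=1$ is secured the reduction is automatic, and the only remaining item is the bookkeeping that the rank equals $N(d-1)+1$. I would therefore spend most of the care on choosing the permutations defining the cover and on tracking the quantities $p(z_i),q(z_i)$ through Lemma \ref{collapse}.
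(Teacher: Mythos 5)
Your proof is correct and takes essentially the same route as the paper's: reduce via Lemma \ref{l1} to the bouquet groups $G^d_{1,n}$, realize larger bouquets $G^{d'}_{1,n}$ as finite-index subgroups through explicit connected covers of the defining graph, and identify them using the collapse and slide machinery of Lemmas \ref{collapse} and \ref{slide}. The only real difference is organizational: the paper funnels everything through $G^2_{1,n}$ by showing that $G^k_{1,n}$ embeds with finite index in $G^2_{1,n}$ (which is precisely the case $d=2$, $N=k-1$ of your general claim), whereas you apply the index-$N$ statement symmetrically to exhibit $G^{(k-1)(l-1)+1}_{1,n}$ as a common finite-index subgroup of $G^k_{1,n}$ and $G^l_{1,n}$.
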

\begin{proof}
Suppose first that $n=1$. It is well-known and easy to see that $BS(k,k)$ has a finite index subgroup isomorphic to $F_{k} \times \mathbb{Z}$, so  $BS(k,k)$ and $BS(l,l)$ are commensurable, for all $k,l > 1$. 

Suppose now that $n>1$. It suffices to show that for each $n>1$ and $k>2$ the groups $BS(2,2n)$ and $BS(k,kn)$ are commensurable. By Lemma \ref{l1}, $BS(d,dn)$ has $G_{1,n}^d$ as a finite index subgroup, which is a GBS group  with underlying graph having one vertex and $d$ loops $E_1, \ldots, E_d$ such that $A(E_i)=p$, $\Omega(E_i)=q$, $i=1, \ldots, d$. It suffices to show that $G^{2}_{1,n}$ has $G^{k}_{1,n}$ as a finite index subgroup, for every $k>2$.

Indeed, for each $k > 2$ consider a finite index subgroup $H_k$ of $F_2=\langle a, b \mid \: \rangle$ given by the following covering graph $\Gamma_k$. It has vertices $v_1$, \ldots, $v_{k-1}$, and the following edges: if $k$ is odd, it has loops labelled by $a$ at $v_1$ and $v_{k-1}$, oriented edges labelled by $a$ from $v_{2i}$ to $v_{2i+1}$ and back, $i=1, \ldots, (k-3)/2$ (none if $k=3$), and oriented edges labelled by $b$ from $v_{2i-1}$ to $v_{2i}$ and back, $i=1, \ldots,(k-1)/2$; and if $k$ is even, it has a loop labelled by $a$ at $v_1$, a loop labelled by $b$ at $v_{k-1}$, oriented edges labelled by $a$ from $v_{2i}$ to $v_{2i+1}$ and back, $i=1, \ldots, k/2-1$, and oriented edges labelled by $b$ from $v_{2i-1}$ to $v_{2i}$ and back, $i=1, \ldots, k/2-1$. 

Note that $H_k$ has index $k-1$ in $F_2$, and has rank $k$. Consider the epimorphism $\varphi: G^2_{1,n} \rightarrow F_2$, which has the set of all elliptic elements of $G^2_{1,n}$ as the kernel, and let $H'_k$ be the full preimage of $H_k$ in $G^2_{1,n}$ under $\varphi$. Then $H'_k$ has finite index in $G^2_{1,n}$, and it is easy to see that it has the following GBS structure: the underlying graph is $\Gamma_k$ (with labelling by $a$ and $b$ removed), and for every oriented edge $e$ we have $A(e)=1$, $\Omega(e)=n$.

Choose any maximal subtree in $\Gamma_k$ which consists of edges oriented from $v_i$ to $v_{i+1}$, $i=1, \ldots, k-2$, and apply the collapse moves to these edges for $H_k'$, in any order. It is easy to see that after performing these $k-2$ reductions we obtain a bouquet of circles $S_k$, with edges $e_1, \ldots, e_k$, where for each $i=1,\ldots,k$ we have $A(e_i)=n^{p_i}$, $\Omega(e_i)=n^{q_i}$ for some non-negative integers $p_i, q_i$. Without loss of generality we can suppose that $e_1$ is the edge coming from the loop at $v_{i+1}$, then its label was not changed while collapsing the edges, so $A(e_1)=1$, $\Omega(e_1)=n$. 

Now successively applying slide moves over the edge $e_1$ one can make the edges $e_2, \ldots, e_k$ have labels $A(e_i)=1$ and $\Omega(e_i)=n$, $i=2,\ldots, k$, since these labels were powers of $n$ and sliding over $e_1$ or its inverse allows to divide or multiply a given label of a given edge distinct from $e_1$ by $n$. Since collapse moves and slide moves don't change the GBS group, we obtain that $H_k'$ is isomorphic to $G^k_{1,n}$, and so $G^k_{1,n}$ sits as a finite index subgroup in $G^2_{1,n}$, for every $k > 2$. This completes the proof of the lemma.
\end{proof}

\section{Proof of the main result}

We now complete the proof of Theorem \ref{thm1}.
First we show that $G_1$ and $G_2$ are indeed commensurable in the cases described in the theorem. Indeed, by Lemma \ref{l4} groups $BS(m,n)$ and $BS(-m,n)$ are commensurable. Moreover, as mentioned above, it is easy to see that $BS(1,n^k)$ embeds as a finite index subgroup in $BS(1,n)$ for $n,k \geq 1$, so $BS(1,n^k)$ and $BS(1,n^l)$ are commensurable for all $n,k,l \geq 1$ (see \cite{FM}). So if $|m_1|=|m_2|=1$ and $n_1, n_2$ are powers of the same integer, then $BS(m_1,n_1)$ and $BS(m_2,n_2)$ are commensurable. Finally, suppose that $|m_1|>1$, $|m_2| >1$, $m_1  \mid  n_1$, $m_2 \mid n_2$ and $n_1/|m_1|=n_2/|m_2|$. By Lemma \ref{l5} the groups $BS(|m_1|,n_1)$ and $BS(|m_2|,n_2)$ are commensurable, and, therefore, by Lemma \ref{l4}, $BS(m_1,n_1)$ and $BS(m_2,n_2)$ are also commensurable, as desired.

To prove the converse, we will need the following two lemmas.

\begin{lemma}\label{non-ascending}
	Suppose that $n_1 > m_1 > 1$ and $n_2 > m_2 > 1$,  $m_1 \nmid n_1$, $m_2 \nmid n_2$, and the pairs $(m_1,n_1)$ and $(m_2,n_2)$ are distinct. Then the groups $BS(m_1,n_1)$ and $BS(m_2, n_2)$ are not commensurable.
\end{lemma}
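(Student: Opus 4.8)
The plan is to reduce, via the distinguished finite-index subgroups of Lemma~\ref{l1}, to a comparison of two \emph{homogeneous} covering graphs, and then to extract two invariants: the number of vertices together with the covering degree (which recovers the multiplicity $d_i$), and the collection of edge-moduli tracked along a slide sequence (which recovers the label pair $(p_i,q_i)$).

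First I would observe that $BS(m_i,n_i)$ has no non-trivial integral moduli. Writing $m_i=d_ip_i$, $n_i=d_iq_i$ with $d_i=\gcd(m_i,n_i)$ and $\gcd(p_i,q_i)=1$, the hypothesis $m_i\nmid n_i$ forces $p_i>1$ and $n_i>m_i$ forces $q_i>p_i$; since $(q_i/p_i)^k$ is an integer only if $p_i=1$, the image $M(BS(m_i,n_i))=\langle q_i/p_i\rangle$ contains no integer of absolute value $>1$. By Remark~\ref{r4} the same holds for every finite-index subgroup, so all the deformation spaces in play are non-ascending, and any two reduced graphs in them are related by slide moves alone. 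Assuming commensurability, fix an isomorphism $\phi\colon L_1\to L_2$ of finite-index subgroups $L_i\le BS(m_i,n_i)$, set $H_i=H_{m_i,n_i}\cong G^{d_i}_{p_i,q_i}$, and replace $L_1$ by $K_1:=L_1\cap H_1\cap\phi^{-1}(H_2)$, still of finite index; then $K_1\le G^{d_1}_{p_1,q_1}$ and $K_2:=\phi(K_1)\le G^{d_2}_{p_2,q_2}$ with $K_1\cong K_2$. By Lemma~\ref{l3} each $K_i$ is the GBS group of a graph $\Gamma_{K_i}$ covering the bouquet of $d_i$ circles in which \emph{every} positive edge $e$ carries $A(e)=p_i$, $\Omega(e)=q_i$; since $q_i>p_i>1$ this graph is reduced and each vertex has degree $2d_i$.

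Next I would count. As $\Gamma_{K_1}$ and $\Gamma_{K_2}$ are reduced graphs of the isomorphic groups $K_1\cong K_2$ (which contain $F_2$, hence are none of the exceptional groups) lying in a non-ascending deformation space, they are joined by slide moves and so have the same number $V$ of vertices and the same number $E$ of positive edges. A degree-$V$ cover of the $d_i$-petal bouquet has $E=Vd_i$, so $Vd_1=Vd_2$ gives $d_1=d_2$. To recover the labels I would then track edge-moduli along the slide sequence: orient all edges with positive labels and assign to each positive edge $e$ its modulus $A(e)/\Omega(e)$. A slide of $e$ over $f$ replaces $A(e)$ by $A(e)\,\Omega(f)/A(f)$, thereby multiplying the modulus of $e$ by the modulus of $f$ (inverted) and leaving every other edge untouched. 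In $\Gamma_{K_1}$ every edge-modulus equals $p_1/q_1=(q_1/p_1)^{-1}$, so by induction along the sequence every edge-modulus at every intermediate graph, in particular at $\Gamma_{K_2}$, is an integral power of $q_1/p_1$. But in $\Gamma_{K_2}$ every edge-modulus equals $p_2/q_2$, whence $q_2/p_2=(q_1/p_1)^{c}$ for some integer $c\ge1$; running the argument from $\Gamma_{K_2}$ back to $\Gamma_{K_1}$ gives $q_1/p_1=(q_2/p_2)^{c'}$ with $c'\ge1$. Therefore $cc'=1$, so $c=c'=1$ and $q_1/p_1=q_2/p_2$; as both are in lowest terms, $p_1=p_2$ and $q_1=q_2$. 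Combined with $d_1=d_2$ this forces $(m_1,n_1)=(m_2,n_2)$, contradicting the distinctness hypothesis.

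I expect the last step to be the main obstacle. The modular image alone does not separate the groups — for example $M(BS(2,3))=\langle 3/2\rangle$ and $M(BS(4,9))=\langle (3/2)^2\rangle$ have the same root closure, so one cannot conclude from $M(K_1)=M(K_2)$ — and the multiset of labels is not preserved by slides. The decisive point is the \emph{stronger}, slide-invariant statement that all edge-moduli remain powers of one fixed rational, which survives the entire slide sequence precisely because the no-integral-moduli (non-ascending) hypothesis removes induction and $\mathcal{A}^{\pm1}$-moves. A secondary, purely technical point is arranging the isomorphic subgroups to sit inside the distinguished subgroups $H_{m_i,n_i}$ simultaneously, which the triple intersection $K_1=L_1\cap H_1\cap\phi^{-1}(H_2)$ takes care of.
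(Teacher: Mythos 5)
Your proof is correct and follows essentially the same route as the paper's: reduce to the groups $G^{d_i}_{p_i,q_i}$ via Lemma~\ref{l1}, represent finite-index subgroups by the covering graphs of Lemma~\ref{l3}, use the absence of non-trivial integral moduli to place the reduced graphs in a non-ascending deformation space related by slide moves only, and then read off slide-invariants (vertex/edge counts for $d_1=d_2$, and a label invariant for $(p_1,q_1)=(p_2,q_2)$). The only real difference is the last step: the paper notes directly that slides cannot alter the label set $\{p_i,q_i\}$ (since $p_i,q_i>1$ are coprime), whereas you track edge-moduli as powers of $q_1/p_1$ and therefore need the additional symmetric argument to exclude $q_2/p_2=(q_1/p_1)^c$ with $c\ge 2$; both versions are valid.
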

\begin{proof}
We use the notation of previous sections. Let $d_1=\gcd(m_1,n_1)$ and $d_2=\gcd(m_2,n_2)$, $m_1=p_1d_1, \: n_1=q_1d_1, \: m_2=p_2d_2, \: n_2=q_2d_2$, and $q_1>p_1>1$, $q_2>p_2>1$. Since $G^{d_i}_{p_i,q_i}$ embeds as a finite index subgroup in $BS(m_i,n_i)$ for $i=1,2$ by Lemma \ref{l1}, it suffices to prove that $G_1=G^{d_1}_{p_1,q_1}$ and $G_2=G^{d_2}_{p_2,q_2}$ are not commensurable.

Suppose $K_1$ is a finite index subgroup of $G_1$ and $K_2$ is a finite index subgroup of $G_2$, then $K_1$ and $K_2$ satisfy the conclusions of Lemma \ref{l3}. Let $\Gamma_{K_i}$ be the covering graphs which are GBS graphs for $K_i$, $i=1,2$, as in Lemma \ref{l3}. Then $\Gamma_{K_i}$ is reduced, and the degree of each vertex in $K_i$ is equal to $2d_i$, for $i=1,2$. It follows that if $V_i$ is the number of vertices in $\Gamma_{K_i}$, then the number of edges in $\Gamma_{K_i}$ is $d_iV_i$, and so the rank of the (topological) fundamental group of the underlying graph of $\Gamma_{K_i}$ is equal to $V_i (d_i-1)$, $i=1,2$.

Note that since $m_i \nmid n_i$, the group $BS(m_i,n_i)$ has no non-trivial integral moduli, and so also the GBS groups $G_i$ and $K_i$ have no non-trivial integral moduli, by Remark \ref{r4}. This means that the deformation spaces for $K_1$ and $K_2$ are non-ascending, since the existence of a strictly ascending loop immediately implies existence of a non-trivial integral modulus, by Remark \ref{rem}. It follows that any two graphs in such a deformation space are related by a finite sequence of slide moves (see \cite{F2} and \cite{GL}). 

Suppose that $K_1$ and $K_2$ are isomorphic, then $\Gamma_{K_1}$ and $\Gamma_{K_2}$ are in the same deformation space. 

Suppose first $p_1=q_1$ and $p_2=q_2$, then $d_1 \neq d_2$. Since $\Gamma_{K_1}$ and $\Gamma_{K_2}$ are related by slide moves, they have the same number of vertices: $V_1=V_2$. But also $V_1(d_1-1)=V_2(d_2-1)$, since deformations never change the rank of the (topological) fundamental group, contradiction.

Then we have $p_1 \neq q_1$ or $p_2 \neq q_2$. But the slide moves applied to $\Gamma_{K_1}$ will never change the set of labels of all edges, which is $\{ p_1, q_1 \}$ for $\Gamma_{K_1}$ and $\{ p_2, q_2 \}$ for $\Gamma_{K_2}$, so it is impossible to obtain $\Gamma_{K_2}$ from $\Gamma_{K_1}$ by slide moves, contradiction.
\end{proof}

\begin{lemma}\label{ascending}
	Suppose that $n_1>m_1>1$, $n_2>m_2 >1$, $m_1  \mid  n_1$, $m_2 \mid n_2$, and $\frac{n_1}{m_1} \neq \frac{n_2}{m_2}$. Then the groups $BS(m_1,n_1)$ and $BS(m_2, n_2)$ are not commensurable.
\end{lemma}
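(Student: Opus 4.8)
The plan is to reduce, via Lemma \ref{l1}, to the groups $G^{d}_{1,q}$ and then to combine the normal form of Lemma \ref{slide} with the isomorphism criterion of Theorem \ref{iso}. First I would record that, since $m_i \mid n_i$ with $n_i > m_i > 1$ and $\gcd(m_i,n_i)=d_i$, writing $m_i=d_ip_i$, $n_i=d_iq_i$ with $\gcd(p_i,q_i)=1$ forces $p_i=1$; hence $d_i=m_i\geq 2$, $q_i=n_i/m_i\geq 2$, and the hypothesis $\frac{n_1}{m_1}\neq\frac{n_2}{m_2}$ becomes simply $q_1\neq q_2$. By Lemma \ref{l1} the group $BS(m_i,n_i)$ contains $G^{d_i}_{1,q_i}$ with finite index, so it suffices to prove that $G^{d_1}_{1,q_1}$ and $G^{d_2}_{1,q_2}$ are not commensurable. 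Assume for contradiction that they are, so that there exist finite index subgroups $K_1\leq G^{d_1}_{1,q_1}$ and $K_2\leq G^{d_2}_{1,q_2}$ with $K_1\cong K_2$.

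By Remark \ref{r4} the modular image is an isomorphism invariant, and by Remark \ref{rem} we have $M(G^{d_i}_{1,q_i})=\langle q_i\rangle_{\mathbb{Q}^*}$, so $M(K_i)=\langle q_i^{M_i}\rangle_{\mathbb{Q}^*}$ for some $M_i\geq 1$; matching these images gives $q_1^{M_1}=q_2^{M_2}$. Consequently $q_1$ and $q_2$ are powers of a common integer, and I would take $r$ maximal so that $q_1=r^{l_1}$, $q_2=r^{l_2}$ with $\gcd(l_1,l_2)=1$. Lemma \ref{slide} then puts each $K_i$ into the normal form $\Gamma(q_i,r;M_i;\ldots)$ of Theorem \ref{iso}, where $M_i=[M(G^{d_i}_{1,q_i}):M(K_i)]$ is exactly the exponent recorded by $A(f_1)=1$, $\Omega(f_1)=q_i^{M_i}$. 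Applying Theorem \ref{iso} to the isomorphic groups $K_1,K_2$, condition \eqref{isoit1} yields $l_1M_1=l_2M_2=:S$, while condition \eqref{isoit3} tells us that $V(\Gamma_1)$ and $V(\Gamma_2)$, both lying in $\mathbb{Z}^S$, are cyclic permutations of one another.

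The heart of the argument is to extract from $V(\Gamma_i)$ a quantity that recovers $l_i$ yet is invariant under cyclic permutation. A petal carrying label $q_i^{p}=r^{l_ip}$ contributes to position $l_ip$, so $\mathrm{supp}(V(\Gamma_i))\subseteq l_i\mathbb{Z}/S\mathbb{Z}$; I therefore set $g_i=\gcd\bigl(\{a-a' : a,a'\in\mathrm{supp}(V(\Gamma_i))\}\cup\{S\}\bigr)$, which is manifestly translation-invariant and so takes the same value on the two cyclically permuted vectors, giving $g_1=g_2$. On one hand $l_i\mid g_i$. On the other hand $g_i=l_i$ in every case: if $M_i=1$ then all petals other than $f_1$ are trivial loops by Lemma \ref{slide}\eqref{slideit2}, so $\mathrm{supp}(V(\Gamma_i))=\{0\}$ and $g_i=S=l_i$; and if $M_i\geq 2$ then $K_i\neq G^{d_i}_{1,q_i}$ (the whole group has $M_i=1$), so Lemma \ref{slide}\eqref{slideit3} and \eqref{slideit4} place petals of labels $1$ and $q_i$ in the normal form, i.e.\ positions $0$ and $l_i$ both lie in the support, whence $g_i\mid l_i$. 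Thus $l_1=g_1=g_2=l_2$, and $\gcd(l_1,l_2)=1$ forces $l_1=l_2=1$, so $q_1=r=q_2$, contradicting $q_1\neq q_2$.

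The main obstacle is isolating the correct invariant in the final step. The naive gcd of the support positions is \emph{not} preserved by the cyclic permutation produced by the induction moves in Theorem \ref{iso}, so one is forced to pass to the gcd of pairwise differences; and one must then verify that this equals $l_i$ uniformly, covering the degenerate low-rank situations $M_i=1$ and $K_i=G^{d_i}_{1,q_i}$, where it is precisely the guaranteed petals of Lemma \ref{slide}\eqref{slideit3}--\eqref{slideit4} (equivalently Lemma \ref{collapse}\eqref{collapseit4}--\eqref{collapseit5}) that pin the value down.
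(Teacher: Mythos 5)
Your proof is correct and follows essentially the same route as the paper: reduce via Lemma \ref{l1} to finite index subgroups of $G^{d}_{1,q}$, put them in the normal form of Lemma \ref{slide}, apply Theorem \ref{iso}, and derive the contradiction from the guaranteed petals with labels $1$ and $q_i$ (Lemma \ref{slide}\eqref{slideit3}--\eqref{slideit4}). The only cosmetic differences are that the paper first normalises the number of loops to $2$ via Lemma \ref{l5}, and extracts the final contradiction by noting that $V(\Gamma_1)$ has two non-zero coordinates at distance $l_1<l_2$ while $V(\Gamma_2)$ and its cyclic permutations cannot, rather than via your translation-invariant gcd of support differences.
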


\begin{proof}
Let $n_1=m_1d_1$, $n_2=m_2d_2$, then $d_1 \neq d_2$. It follows from Remark \ref{r4} that if $d_1$ and $d_2$ do not have a common power, then $BS(m_1,n_1)$ and $BS(m_2,n_2)$ are not commensurable. Therefore, we can suppose that there exist $r,l_1,l_2$ such $d_1=r^{l_1}$, $d_2=r^{l_2}$. We need to show that $BS(m_1, m_1r^{l_1})$ and $BS(m_2, m_2r^{l_2})$ are not commensurable, when $l_1 \neq l_2$. Note that by Lemma \ref{l5}, $BS(m_1,m_1r^{l_1})$ is commensurable to $BS(2,2r^{l_1})$ and $BS(m_2,m_2r^{l_2})$ is commensurable to $BS(2,2r^{l_2})$.
Moreover, by Lemma \ref{l1}, $BS(2,2r^{l_1})$ is commensurable to $G^2_{1,r^{l_1}}$ and  $BS(2,2r^{l_2})$ is commensurable to $G^2_{1,r^{l_2}}$.
Therefore, it suffices to show that $G_1=G^2_{1,r^{l_1}}$ and $G_2=G^2_{1,r^{l_2}}$ are not commensurable, for any $r>1$, $l_1 \neq l_2$, $l_1,l_2 \geq 1$.

Suppose on the contrary that $G_1$ and $G_2$ are commensurable, and $K_1 \leq G_1$ and $K_2 \leq G_2$ are isomorphic finite index subgroups. Note first that $G_1$ and $G_2$ are not isomorphic, since they have different images under the modular homomorphism (generated by $r^{l_1}$ for $G_1$ and by $r^{l_2}$ for $G_2$), see Remark \ref{r4}. Furthermore, note that $K_1 \neq G_1$. Indeed, the quotient of $G_1$ over all the elliptic elements gives $F_2$, and it is easy to see from Lemma \ref{l3} that for every proper finite index subgroup $K_2 \leq G_2$ the quotient of $K_2$ over all the elliptic elements is a free group of rank at least 3 (since it is a proper finite index subgroup of $F_2$, given by the covering graph  $\Gamma_{K_2}$). Therefore, we can assume that $K_1 \neq G_1$ and $K_2 \neq G_2$.

Let $m_1$ for $K_1$, $m_2$ for $K_2$ be defined as in (\ref{m}).
By Lemma \ref{slide}, $K_1$ is isomorphic to a GBS group defined by a bouquet $B_{K_1}$ of circles $f_1, \ldots, f_k$, such that $A(f_1)=1$, $\Omega(f_1)=r^{l_1m_1}$, and for each $i=2, \ldots, k$ we have $A(f_i)=\Omega(f_i)=r^{l_1p_i}$, where $0 \leq p_i \leq m_1-1$. Similarly, $K_2$ is isomorphic to a GBS group defined by a bouquet $B_{K_2}$ of circles $f_1', \ldots, f_k'$, such that $A(f_1')=1$, $\Omega(f_1')=r^{l_2m_2}$, and for each $i=2, \ldots, k$ we have $A(f_i)=\Omega(f_i)=r^{l_2q_i}$, where $0 \leq q_i \leq m_2-1$. Note that the number of loops is indeed the same, since the quotients over all the elliptic elements must be isomorphic. Moreover, $k \geq 3$, since $K_1 \neq G_1$ and $K_2 \neq G_2$.  

By Theorem \ref{iso}, we get that $l_1m_1=l_2m_2$ and $V(B'_{K_1})$ is a cyclic permutation of $V(B'_{K_2})$. Denote $l_1m_1$ by $L$. Let $V(B'_{K_1})=(v_0,v_1,\ldots,v_{L-1})$ and $V(B'_{K_2})=(w_0,w_1,\ldots,w_{L-1})$.

Since $l_1 \neq l_2$, we have $m_1 \neq m_2$. Without loss of generality, we can assume that $m_1 > m_2$. In particular, $m_1 \geq 2$. Since also $k \geq 3$,  it follows from Lemma \ref{slide} that there exist $i, j \in \{2, \ldots, k\}$ such that $A(e_i)=\Omega(e_i)=1$ and $A(e_j)=\Omega(e_j)=r^{l_1}$. In other words, $v_0 \neq 0$ and $v_{l_1} \neq 0$. However, since $l_1m_1=l_2m_2$ and $m_1>m_2$, we have $l_1<l_2$, so neither the vector $(w_0,w_1,\ldots,w_{L-1})$ nor its cyclic permutations can have two non-zero coordinates with indices differing by $l_1$, since in $(w_0,w_1,\ldots,w_{L-1})$ only the coordinates with indices dividing $l_2$ can be non-zero. But $(w_0,w_1,\ldots,w_{L-1})$ is a cyclic permutation of $(v_0,v_1,\ldots,v_{L-1})$, which has two non-zero coordinates with indices differing by $l_1$. This is a contradiction. 
\end{proof}

Now suppose that none of the conditions (1), (2), (3) of Theorem \ref{thm1} hold. We claim that in this case $G_1$ and $G_2$ are not commensurable. By Farb and Mosher \cite{FM}, this is true in the solvable case, so we can assume that $n_1 \geq |m_1|  > 1$ and $n_2 \geq |m_2| > 1$. Moreover, since $BS(m,n)$ and $BS(-m,n)$ are commensurable, we can assume that $m_1,m_2 > 0$. Note also that if $m_1=n_1$ and $m_2>n_2$, then $BS(m_1, n_1)$ and $BS(m_2,n_2)$ cannot be commensurable, in fact they are not even quasi-isometric \cite{W}. Therefore, we can assume that $n_1 > m_1 > 1$ and $n_2 > m_2 > 1$.

By Remark \ref{r4},  if $m_1 \mid n_1$ and $m_2 \nmid n_2$,  then $BS(m_1,n_1)$ and $BS(m_2,n_2)$ are not commensurable, and the same is true if $m_1 \nmid n_1$ and $m_2 \mid n_2$.
Therefore, there are two cases to consider  -- the non-ascending case, when $m_1 \nmid n_1$, $m_2 \nmid n_2$, and the ascending case, when $m_1 \mid n_1$, $m_2 \mid n_2$. In the non-ascending case Lemma \ref{non-ascending} implies that  $BS(m_1,n_1)$ and $BS(m_2,n_2)$ are not commensurable, and in the ascending case Lemma \ref{ascending} implies that  $BS(m_1,n_1)$ and $BS(m_2,n_2)$ are not commensurable. This completes the proof of Theorem \ref{thm1}.


\begin{thebibliography}{b}
\bibitem{BS} G.~Baumslag, D.~Solitar, {\it Some two-generator one-relator non-Hopfian groups}, Bull. Amer. Math. Soc. 68 (1962), 199-201.
\bibitem{CKZ1} M. Casals-Ruiz, I. Kazachkov, A. Zakharov, 
\textit {On commensurability of right-angled Artin groups I: RAAGs defined by trees of diameter 4},  Revista Matemática Iberoamericana 35:2 (2019), 521-560.
\bibitem{CF1} M. Clay, M. Forester, {\it On the isomorphism problem for generalized Baumslag-Solitar groups}, Algebr. Geom. Topol., V. 8 (2008), 2289-2322.
\bibitem{CF2} M. Clay, M. Forester, {\it Whitehead moves for $G$-trees}, Bull. London Math. Soc., 41 (2009), 205-212.
\bibitem{C} D.~J.~Collins, {\it Generation and presentation of one-relator groups with centre}, Math. Z. 157 (1977), 63-77. 
\bibitem{D1} F.~A. Dudkin, {\it Subgroups of finite index in Baumslag-Solitar groups}, Algebra Logic 49 (2010), 221-232.
\bibitem{D2} F.~A. Dudkin, {\it The isomorphism problem for generalized Baumslag-Solitar groups with one mobile edge}, Algebra and Logic, V. 56 (2017), 197-209.
\bibitem{EP} M. Edjvet, S. J. Pride, {\it The concept of ``largeness'' in group theory. II}, Groups --- Korea 1983 (Kyoungju, 1983), pp. 29-54, Lecture Notes in Math., 1098, Springer, 1984.
\bibitem{FM} B. Farb, L. Mosher, {\it A rigidity theorem for the solvable Baumslag-Solitar groups}, Inventiones Math., V. 131 (1998), 419-451.
\bibitem{F1} M. Forester, {\it Deformation and rigidity of simplicial group actions on trees}, Geom. Topol., V. 6 (2002), 219-267.
\bibitem{F2} M. Forester, {\it Splittings of generalized Baumslag-Solitar groups}, Forester, Geometriae Dedicata, V. 121 (2006), 43-59.
\bibitem{F3} M. Forester, {\it On uniqueness of JSJ decompositions of finitely generated groups}, Comment. Math. Helv. 78 (2003) 740-751.
\bibitem{G} E. Gelman, {\it Subgroup growth of Baumslag-Solitar groups}, J. Group Theory, V. 8 (2005), 801-806.
\bibitem{GHMR} N.~D.~Gilbert, J.~Howie, V.~Metaftsis, E.~Raptis, {\it Tree actions of automorphism groups}, J. Group Theory 3 (2000), 213-223.
\bibitem{GL} V. Guirardel, G. Levitt, {\it Deformation spaces of trees}, Groups Geom. Dyn., V. 1 (2007), 135-181.
\bibitem{KWM} I. Kapovich, R. Weidmann and A. Myasnikov, {\it Foldings, graphs of groups and the membership problem}, Int. J. Algebra Comput. 15(1) (2005), 95-128.
\bibitem{K1} P.~H.~Kropholler, {\it Baumslag–Solitar groups and some other groups of cohomological dimension two}, Comment. Math. Helv. 65, 547-558 (1990)
\bibitem{K2} P.~H.~Kropholler, {\it A group-theoretic proof of the torus theorem}, Geometric Group Theory, V. 1 (G. A. Niblo and M. A. Roller, eds.), LMS Lecture Note Series, V. 181, Cambridge University Press (1993), 138-158.
\bibitem{L1} G. Levitt, {\it Quotients and subgroups of Baumslag-Solitar groups}, J. Group Theory, V. 18 (2014), 1-43.
\bibitem{L2} G. Levitt, {\it Generalized Baumslag-Solitar groups: rank and finite index subgroups}, Annales de l'Institut Fourier,  V. 65 (2015), 725-762.
\bibitem{L3} G. Levitt, {\it On the automorphism group of generalized Baumslag-Solitar groups}, Geom. Topol., V. 11 (2007), 473-515.
\bibitem{L4} G. Levitt, {\it Characterizing rigid simplicial actions on trees}, Geometric methods in group theory, 27-33, Contemp. Math., 372, Amer. Math. Soc., Providence, RI, 2005. 
\bibitem{L5} G. Levitt, {\it Two Baumslag Solitar groups are not commensurable, apart from obvious exceptions}, Open problems of IHP trimester on random walks and asymptotic geometry of groups, 2014,  \texttt{https://sites.google.com/site/geowalks2014/home/openquestions}
\bibitem{Mc} J. McCool, {\it A class of one-relator groups with centre}, Bull. Austral. Math. Soc. 44 (1991), 245-252. 
\bibitem{P} M.~Pettet, {\it The automorphism group of a graph product of groups}, Comm. Algebra 27 (1999), 4691-4708.
\bibitem{Pi} A.~Pietrowski, {\it The isomorphism problem for one-relator groups with non-trivial centre}, Math. Z. 136 (1974), 95-106.
\bibitem{Serre} J. - P. Serre, {\it Trees}, Springer-Verlag, 1980.
\bibitem{W} K. Whyte, {\it The large-scale geometry of the higher Baumslag-Solitar groups}, Geom. Funct. Anal. 11 (2001), 1327-1343
\end{thebibliography}
\end{document}